\newtheorem{theorem}{Theorem}
\newtheorem{definition}[theorem]{Definition}
\newtheorem{lemma}[theorem]{Lemma}
\newtheorem{remark}[theorem]{Remark}
\newenvironment{proof}[1][Proof]{\noindent\textbf{#1.} }{\ \rule{0.5em}{0.5em}}
\numberwithin{equation}{section}
\begin{document}

\title{TEMPERED RELAXATION EQUATION \\  AND
 RELATED GENERALIZED STABLE PROCESSES}
 \author{Luisa Beghin $^1$, Janusz Gajda $^2$}
\date{}
 \maketitle

 \begin{abstract}

Fractional relaxation equations, as well as relaxation functions
time-changed by independent stochastic processes have been widely
studied (see, for example, \cite{MAI}, \cite{STAW} and
\cite{GAR}). We start here by proving that the upper-incomplete
Gamma function satisfies the tempered-relaxation equation (of
index $\rho \in (0,1)$); thanks to this explicit form of the
solution, we can then derive its spectral distribution, which
extends the stable law. Accordingly, we define a new class of
selfsimilar processes (by means of the $n$-times Laplace transform
of its density) which is indexed by the parameter $\rho $: in the
special case where $\rho =1$, it reduces to the stable
subordinator.

Therefore the parameter $\rho $ can be seen as a measure of the
local deviation from the temporal dependence structure displayed
in the standard stable case.

 \medskip

{\it MSC 2010\/}: Primary 26A33;
                  Secondary 34A08, 33B20, 60G52, 60G18.

 \smallskip

{\it Key Words and Phrases}: Incomplete gamma function; tempered
derivative; stable laws; relaxation function; H-functions; self-similar processes.


 \end{abstract}

 \vspace*{-20pt}



 \section{Introduction}\label{sec:1}

The Cole-Cole law, i.e. $\varphi (t)=e^{-\lambda t},$ which
satisfies the ordinary first-order differential equation
\begin{equation}
\frac{d}{dt}\varphi (t)=-\lambda \varphi (t),\qquad \varphi (0)=1,
\label{rr}
\end{equation}%
(for $\lambda ,t\in \mathbb{R}^{+}$), is suited to model
relaxation phenomena with an exponential decay for large values of
$t$. In many real situations, such as, for example, when
considering the electromagnetic properties of some materials, as
well as the rheological models for viscoelastic materials,
fractional relaxation functions, with long-memory, instead of
exponential, decay, have been preferred. In particular, by
replacing in (\ref{rr}) the first derivative with the fractional
Caputo derivative of order $\alpha \in (0,1),$ the fractional
relaxation equation

\begin{equation}
^{C}\mathcal{D}_{t}^{\alpha }\varphi (t)=-\lambda ^{\alpha
}\varphi (t),\qquad \varphi (0)=1,  \label{fr}
\end{equation}%
has been studied. Its solution is proved in \cite{MAI} to coincide with $%
\varphi (t)=E_{\alpha ,1}(-\lambda ^{\alpha }t^{\alpha })$ (see
the definitions of Mittag-Leffler function and Caputo derivative
below): compared to the standard relaxation function, the
fractional relaxation exhibits, for small values of $t,$ a much
faster decay (the derivative tends to $-\infty $ instead of $-1$),
while, for large values of $t$, it shows a much slower decay (an
algebraic decay instead of an exponential decay). In view of its
slow decay, the phenomenon of fractional relaxation is usually
referred to as a super-slow process (see \cite{MAI}). For further
generalizations of fractional relaxation equations, see
\cite{GAR}.

Relaxation driven by the inverse of a tempered-stable subordinator
has been considered in \cite{STAW}. The frequency-domain
expression of the tempered relaxation function is given there,
together with its asymptotic estimates: this is proved to be an
intermediate case between the superslow relaxation and the
exponential one.

We recall that tempered fractional derivatives are defined and
studied in \cite{MEE}, while a different definition, in terms of
shifted derivative, is given in \cite{BEG}. For some details on
tempered and inverse tempered subordinators see also \cite{WYL},
\cite{WYL2}, \cite{GAJ2}.

We start by considering here the tempered relaxation equation
\begin{equation}
\mathcal{D}_{t}^{\lambda ,\rho }\varphi (t)=-\lambda ^{\rho
}\varphi (t),\qquad \varphi (0)=1,  \label{ri}
\end{equation}%
for $\lambda >0$, $t\geq 0$ and $\rho \in (0,1],$ where $\mathcal{D}%
_{t}^{\lambda ,\rho }$ is defined as particular case of the
so-called convolution-type derivatives (see definitions
(\ref{toa}) and (\ref{temp})
below). We prove that it is satisfied by the following relaxation function%
\begin{equation}
\varphi _{\lambda ,\rho }(t)=\frac{\Gamma (\rho ;\lambda
t)}{\Gamma (\rho )}, \label{trf}
\end{equation}%
where $\Gamma \left( \rho ;x\right) \doteq \int_{x}^{+\infty
}e^{-w}w^{\rho -1}dw$ is the so-called upper-incomplete gamma
function with parameter $\rho \in (0,1].$ Note that $\Gamma \left(
\rho ;x\right) $ is well-defined for
any $\rho \in \mathbb{R}$ and it is real-valued when $x\in \mathbb{R}^{+}$.%

In the special case where $\rho =1,$ equation (\ref{ri}) reduces
to the standard relaxation equation (\ref{rr}). The asymptotic
behavior of the solution coincides with that of the model in
\cite{STAW}, but thanks to the explicit form of the solution, we
can obtain here further results: we can prove the completely
monotonicity and integrability of $\varphi _{\lambda ,\rho }$ and
obtain its spectral distribution, which, as we will explain below,
can be considered a generalization of stable laws.

We recall that the Laplace transform of the $\alpha $-stable
subordinator,
i.e. $\Phi (\eta ):=\mathbb{E}e^{-\eta S_{\alpha }(t)}=e^{-\eta ^{\alpha }t}$%
, $t\geq 0,$ satisfies equation (\ref{rr}), with $\lambda =\eta
^{\alpha }.$ We then generalize this result and define a new class
of stochastic processes by means of the $n$-times Laplace
transform of its density, as follows%
\begin{equation}
\mathbb{E}e^{-\sum_{k=1}^{n}\eta _{k}\mathcal{X}_{\alpha ,\rho
}(t_{k})}=\frac{\Gamma \left( \rho ;\sum\limits_{k=1}^{n}\Xi
_{k,n}^{\alpha}\left( t_{k}-t_{k-1}\right) \right) }{\Gamma \left(
\rho \right) },\qquad \eta _{1},...\eta _{n}\geq 0, n\in
\mathbb{N} \label{lp}
\end{equation}%
where $\Xi _{k,n}^{\alpha}:=\left(\sum_{i=k}^{n}\eta
_{i}\right)^{\alpha}$, for $0<t_{1}<....<t_{n},$ for $\alpha \in
(0,1)$ and $\rho \in \left( 0,1 \right].$
In the one-dimensional case (for $n=1$)$,$ the Laplace transform in (%
\ref{lp}) coincides with (\ref{trf}) for $\lambda =\eta ^{\alpha }$, i.e.%
\begin{equation}
\mathbb{E}e^{-\eta \mathcal{X}_{\alpha ,\rho }(t)}=\frac{\Gamma
\left( \rho ;\eta ^{\alpha }t\right) }{\Gamma \left( \rho \right)
},\qquad \eta >0. \label{pia}
\end{equation}%

Moreover, in the special case $\rho =1$, we obtain from (\ref{lp})
the Laplace transform of the $\alpha $-stable subordinator. As we
will see in the next section, both (\ref{lp}) and (\ref{pia}) are
completely monotone functions (with respect to $\eta _{1,}...,\eta
_{n}$ and $\eta ,$ respectively), under the conditions that
$\alpha,\rho \leq 1$ and $\alpha \rho \leq 1$, respectively. Thus
we will assume hereafter $\rho \in (0,1/\alpha ]$ when treating
the one-dimensional distribution, whereas we will restrict to the
case $\rho \in (0,1]$ in the $n$-dimensional case.

Moment generating functions expressed as power series of
incomplete gamma functions can be also found in \cite{STA}, where
they are used to generalize the gamma distribution.

The process $\mathcal{X}_{\alpha ,\rho }:=\{\mathcal{X}_{\alpha
,\rho }(t),t\geq 0\}$ defined in (\ref{lp}) displays a
self-similar property of order $1/\alpha $, for any $\rho $. On
the other hand, it is a L\'{e}vy process only in the stable case
(i.e. for $\rho =1$), since its distribution is not, in general,
infinitely divisible. The main feature of self-similar processes
is the invariance in distribution under suitable scaling of time
and space. Self-similarity is usually associated with long-range
dependence (see, for example, \cite{PIP}). The main tools used to
describe the dependence structure of a stochastic processes are
both the autocorrelation coefficient and the spectral analysis.
However, in the case of heavy-tailed distributions, where the
moments are not finite, we must resort to other tools, such as the
auto-codifference function. In particular, referring to the
definition given in \cite{SAM} and \cite{WYL3}, we are able to
interpret $\rho $ as a a measure of the local deviation from the
temporal dependence structure displayed in the standard $\alpha
$-stable case.

In order to derive the differential equation satisfied by the
transition function $h_{\alpha ,\rho }(x,t)$ of
$\mathcal{X}_{\alpha ,\rho },$ we distinguish the two cases
$0<\rho \leq 1$ and $1<\rho \leq 1/\alpha $: indeed, in the first
one, we prove that $h_{\alpha ,\rho }(x,t)$ satisfies an
integro-differential equation where a space-dependent
time-fractional derivative (of convolution-type) and a Caputo
space-fractional derivative appear. In the case where $\rho >1,$
the differential equation satisfied by the transition density is
obtained only in the special case $\rho =2$ and it
involves the so-called logarithmic differential operator introduced in \cite%
{BEG3} and defined in (\ref{sy2}) below.

We are able to write the explicit expression of $h_{\alpha ,\rho
}(x,t)$, in terms of $H$-functions, which for $\rho =1$ and $t=1,$
reduces to the stable law formula given in (2.28) of \cite{MAT}.
Finally we present some properties of the generalized stable r.v.
$\mathcal{X}_{\alpha ,\rho }$, such as the tails' behavior and the
moments. In particular, we prove that it possesses finite moments
of order $\delta <\alpha \rho $, which can be
written as%
\begin{equation}
\mathbb{E}\left( \mathcal{X}_{\alpha ,\rho }\right) ^{\delta }=\frac{c^{%
\frac{\delta }{\alpha }}}{\Gamma (\rho )}\frac{\Gamma (\rho -\frac{\delta }{%
\alpha })}{\Gamma (1-\delta )}.  \notag
\end{equation}

\quad

We now recall some well-known definitions and results which will
be used throughout the paper.

Let ${\LARGE H}_{p,q}^{m,n}$ denote the Fox's $H$-function defined
as (see
\cite{MAT} p.13):%
\begin{eqnarray}
&&{\LARGE H}_{p,q}^{m,n}\left[ \left. z\right\vert
\begin{array}{ccc}
(a_{1},A_{1}) & ... & (a_{p},A_{p}) \\
(b_{1},B_{1}) & ... & (b_{q},B_{q})%
\end{array}%
\right] \label{hh} \\
&& \doteq \frac{1}{2\pi i}\int_{\mathcal{L}}\frac{\left\{
\prod\limits_{j=1}^{m}\Gamma (b_{j}+B_{j}s)\right\} \left\{
\prod\limits_{j=1}^{n}\Gamma (1-a_{j}-A_{j}s)\right\}
z^{-s}ds}{\left\{ \prod\limits_{j=m+1}^{q}\Gamma
(1-b_{j}-B_{j}s)\right\} \left\{ \prod\limits_{j=n+1}^{p}\Gamma
(a_{j}+A_{j}s)\right\} },\notag
\end{eqnarray}%
with $z\neq 0,$ $m,n,p,q\in \mathbb{N}_{0},$ for $1\leq m\leq q$,
$0\leq
n\leq p$, $a_{j},b_{j}\in \mathbb{R},$ $A_{j},B_{j}\in \mathbb{R}^{+},$ for $%
i=1,...,p,$ $j=1,...,q$ and $\mathcal{L}$ is a contour such that
the
following condition is satisfied%
\begin{equation}
A_{l}(b_{j}+\alpha )\neq B_{j}(a_{l}-k-1),\qquad j=1,...,m,\text{ }l=1,...,n,%
\text{ }\alpha ,k=0,1,...  \label{1.6}
\end{equation}%

Let us recall the following particular case of the $H$-function
which we
will use later, i.e. the Mittag-Leffler function, defined as%
\begin{equation}
E_{\alpha ,\beta }(z):=\sum_{j=0}^{+\infty }\frac{z^{j}}{\Gamma
(\alpha j+\beta )}={\LARGE H}_{1,2}^{1,1}\left[ \left.
-z\right\vert
\begin{array}{cc}
(0,1) & \, \\
(0,1) & (1-\beta ,\alpha )%
\end{array}%
\right] \text{,}  \label{re}
\end{equation}%
for $\alpha >0$ (see formula (1.136) in \cite{MAT}). Moreover, we
recall the other special case of (\ref{hh}) given by the Meijer's
$G$-function, which
is obtained for $A_{i}=1,$ for any $i=1,...,p$ and for $B_{j}=1,$ for any $%
j=1,...,q$:%
\begin{equation}
G_{p,q}^{m,n}\left[ \left. z\right\vert
\begin{array}{c}
(a_{1},...a_{p}) \\
(b_{1},...,b_{q})%
\end{array}%
\right] :=H_{p,q}^{m,n}\left[ \left. z\right\vert
\begin{array}{ccc}
(a_{1},1) & ... & (a_{p},1) \\
(b_{1},1) & ... & (b_{q},1)%
\end{array}%
\right] .  \label{mei}
\end{equation}

In particular, we will use (\ref{mei}) in the case where $n=0$ and
$p=q=m,$
for which the following result holds (see properties 3 and 9 in \cite{KAR}):%
\begin{equation}
G_{p,p}^{p,0}\left[ \left. x\right\vert
\begin{array}{c}
(a_{1},...a_{p}) \\
(b_{1},...,b_{q})%
\end{array}%
\right] =0,\text{ \qquad }|x|>1,  \label{mei2}
\end{equation}%
while, under the assumption that $\sum_{j=1}^{p}\left(
t^{b_{j}}-t^{a_{j}}\right) \geq 0,$ it is%
\begin{equation}
G_{p,p}^{p,0}\left[ \left. x\right\vert
\begin{array}{c}
(a_{1},...a_{p}) \\
(b_{1},...,b_{q})%
\end{array}%
\right] \geq 0,\text{ \qquad }0<|x|<1.  \label{mei3}
\end{equation}

More details on the properties of the function (\ref{mei}),
together with its applications in fractional calculus, can be
found in \cite{KIR}. In particular, for $p=q=1$, the function
${\LARGE G}^{1,0}_{1,1}$ is proved to be the kernel of the
Erdelyi-Kober fractional integral (see also \cite{KAR2}).

We will use the definition and properties of completely monotone
functions, both in the univariate and multivariate cases; thus we
recall that, for Theorem 4.2.1 p.87 in \cite{BOC}, a function
$f(z_{1},...,z_{n})$ in an octant $0<z_{j}<\infty ,$ $j=1,...,n$
can be represented as
\begin{equation}
f(z_{1},...,z_{n})=\int_{0}^{+\infty }...\int_{0}^{+\infty
}e^{-z_{1}x_{1}...-z_{n}x_{n}}d\rho (x_{1},...x_{n}) \label{kir}
\end{equation}%
for a measure $\rho $ such that $\rho (A)\geq 0,$ for any $A,$ if
and only if it is $C^{\infty }$ and
\begin{equation}
(-1)^{k_{1}+...+k_{n}}\frac{\partial ^{k_{1}+...+k_{n}}}{\partial
z_{1}^{k_{1}}...\partial z_{n}^{k_{n}}}f(z_{1},...,z_{n})\geq 0,
\label{rs}
\end{equation}%
for all combinations $k_{1}\geq 0,...,k_{n}\geq 0$ (the
representation being unique). The measure $\rho $ is called
spectral distribution of $f$ and the function satisfying
(\ref{rs}) is said to be completely monotone (hereafter CM). By
the way, we note that the function (\ref{kir}) is linked to the
kernel of a particular case of the Obrechkoff transform (see, for
example, \cite{DIM}).

We also recall the definition of the convolution-type derivative
on the
positive half-axes, in the sense of Caputo (see \cite{TOA}, Def.2.4, for $%
b=0 $): let $\overline{\nu }(ds)$ be a non-negative L\'{e}vy
measure (i.e.
satisfying the condition $\int_{0}^{+\infty }(z\wedge 1)\overline{\nu }%
(dz)<\infty $) and let $\nu (s)=\int_{s}^{+\infty }\overline{\nu
}(dz)$ be its tail, under the assumption that it is absolutely
continuous. Let, moreover, $g:\mathbb{R}^{+}\rightarrow
\mathbb{R}^{+}$, be a Bernstein
function, i.e. with the following representation $g(x)=a+bx+\int_{0}^{+%
\infty }(1-e^{-sx})\overline{\nu }(ds),$ for $a,b\geq 0,$ then%
\begin{equation}
\mathcal{D}_{t}^{g}u(t):=\int_{0}^{t}\frac{d}{ds}u(t-s)\nu
(s)ds,\qquad t>0, \label{toa}
\end{equation}%
for an absolutely continuous function such that $|u(t)|\leq
Me^{\lambda t}$, for some $M,\lambda >0$ and for any $t.$

Convolution-type derivatives (or derivatives defined as integrals
with memory kernels) have been treated recently by many authors:
see, among the others, \cite{KOC}, \cite{KOL}, \cite{STAW},
\cite{GAJ}, \cite{TOA}.

The Laplace transform of $\mathcal{D}_{t}^{g}$ is given by
\begin{equation}
\int_{0}^{+\infty }e^{-\theta t}\mathcal{D}_{t}^{g}u(t)dt=g(\theta )%
\widetilde{u}(\theta )-\frac{g(\theta )}{\theta }u(0),\qquad \mathcal{R}%
(\theta )>\theta _{0},  \label{lapconv}
\end{equation}%
(see \cite{TOA}, Lemma 2.5). It is easy to check that, in the
trivial case where $g(\theta )=\theta $, the convolution-type
derivative coincides with the standard first-order derivative.
When $g(\theta )=\theta ^{\alpha }$, for $\alpha \in (0,1),$ this
Bernstein function coincides with the Laplace exponent of the
$\alpha $-stable subordinator $\mathcal{S}_{\alpha }(t)$
(distributed, for any $t,$ as a stable r.v. $\mathcal{S}_{\alpha
}(\sigma ,\beta ,\mu )$, with $\sigma =(\cos \left( \pi \alpha
/2\right) t)^{1/\alpha },$ $\beta =1$ and $\mu =0$), since
\begin{equation}
\mathbb{E}e^{-\theta \mathcal{S}_{\alpha }(t)}=\exp \left\{
-\theta ^{\alpha }t\right\} ,\qquad t,\theta >0.  \label{dd}
\end{equation}%

In this case, the L\'{e}vy measure reduces to $\overline{\nu
}(ds)=\alpha s^{-\alpha -1}/\Gamma (1-\alpha )$ and thus
(\ref{toa}) coincides with the fractional Caputo derivative of
order $\alpha $, i.e.

\begin{equation}
^{C}\mathcal{D}_{t}^{\alpha }u(t):=\frac{1}{\Gamma (1-\alpha )}\int_{0}^{t}%
\frac{d}{ds}u(s)(t-s)^{-\alpha }ds,\qquad t>0,\;\alpha \in (0,1).
\label{cap}
\end{equation}%

Finally, we will use the Riesz-Feller (RF) fractional derivative,
which is defined, by means of its Fourier transform. Let
$L^{c}(\mathcal{I})$ of
functions for which the Riemann improper integral on any open interval $%
\mathcal{I}$ absolutely converges (see \cite{MAR}). For any $f\in L^{c}(%
\mathcal{I})$, the RF fractional derivative is defined as

\begin{equation}
\mathcal{F}\left\{ \mathcal{D}_{x,\theta }^{\alpha }f(x);\xi
\right\} =-\psi _{\theta }^{\alpha }(\xi )\mathcal{F}\left\{
f(x);\xi \right\} ,\quad \alpha \in (0,2],\quad |\theta |\leq \min
\{\alpha ,2-\alpha \},  \label{uno}
\end{equation}%
with symbol
\begin{equation}
\psi _{\theta }^{\alpha }(\xi ):=|\xi |^{\alpha }e^{i\,sign(\xi
)\theta \pi /2},\qquad \mathcal{\xi \in }\mathbb{R},  \label{due}
\end{equation}%
(see \cite{KIL}, p.131). Since we are interested in the special
case where the support of the density is the positive half-line,
we can take $\xi >0$ and $\theta =-\alpha $ (which corresponds to
the stable law totally skewed to the right).

\section{Tempered relaxation function}\label{sec:2}

\setcounter{section}{2}
\setcounter{equation}{0}\setcounter{theorem}{0}

Under the assumption that $\rho \leq 1,$ we prove that the
(normalized) upper-incomplete Gamma function given in (\ref{trf})
is the solution of a
tempered relaxation equation, which extends the standard one, i.e. (\ref{rr}%
), by means of the tempered fractional derivative, in the Caputo
sense. The latter can be obtained from the definition given in
(\ref{toa}), by choosing
the Bernstein function $g(x)=(\lambda +x)^{\rho }-\lambda ^{\rho },$ for $%
\lambda ,x\in \mathbb{R}^{+}$ and the L\'{e}vy measure $\overline{\nu }(ds)=%
\frac{\rho e^{-\lambda s}s^{-\rho -1}}{\Gamma (1-\rho )}ds$. Thus the tail L%
\'{e}vy measure is taken equal to $\nu (ds)=\frac{\rho \lambda
^{\rho
}\Gamma (-\rho ;\lambda s)}{\Gamma (1-\rho )}ds$ (see \cite{TOA} and \cite%
{STAW}, for details). For $\rho =1$, we have that $g(x)=x$ and
thus the tempered derivative reduces to the first-order
derivative.

\begin{lemma}
\label{Lemrel} Let $\mathcal{D}_{t}^{\lambda ,\rho }$ be defined,
for any absolutely continuous function
$u:\mathbb{R}^{+}\rightarrow \mathbb{R}^{+}$,
such that $|u(t)|\leq ce^{kt},$ for some $c,k>0$ and for any $t\geq 0,$ as%
\begin{equation}
\mathcal{D}_{t}^{\lambda ,\rho }u(t)=\frac{\rho \lambda ^{\rho
}}{\Gamma (1-\rho )}\int_{0}^{t}\frac{d}{dt}u(t-s)\Gamma (-\rho
;\lambda s)ds,\qquad \lambda >0,\text{ }\rho \in (0,1],
\label{temp}
\end{equation}%
then the solution of the following tempered relaxation equation%
\begin{equation}
\mathcal{D}_{t}^{\lambda ,\rho }u(t)=-\lambda ^{\rho }u(t),
\label{temp2}
\end{equation}%
with initial condition $u(0)=1$, is given by%
\begin{equation}
\varphi _{\lambda ,\rho }(t)=\frac{\Gamma (\rho ;\lambda
t)}{\Gamma (\rho )}. \label{fl}
\end{equation}
\end{lemma}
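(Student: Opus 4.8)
The cleanest route is the Laplace transform method, exploiting the transform rule (\ref{lapconv}) for the convolution-type derivative. The plan is to verify that the proposed solution $\varphi_{\lambda,\rho}(t)=\Gamma(\rho;\lambda t)/\Gamma(\rho)$ has a Laplace transform satisfying the transformed version of (\ref{temp2}), and then invoke uniqueness. First I would identify the Bernstein function associated with the derivative $\mathcal{D}_t^{\lambda,\rho}$ in (\ref{temp}): as noted in the paragraph preceding the lemma, this is $g(\theta)=(\lambda+\theta)^{\rho}-\lambda^{\rho}$, with tail L\'evy measure expressed through $\Gamma(-\rho;\lambda s)$. By (\ref{lapconv}), applying the Laplace transform to (\ref{temp2}) with $u(0)=1$ yields
\begin{equation}
g(\theta)\widetilde{\varphi}(\theta)-\frac{g(\theta)}{\theta}=-\lambda^{\rho}\widetilde{\varphi}(\theta),
\label{planLT}
\end{equation}
so that solving for the transform gives $\widetilde{\varphi}(\theta)=g(\theta)/\big(\theta(g(\theta)+\lambda^{\rho})\big)=\big((\lambda+\theta)^{\rho}-\lambda^{\rho}\big)/\big(\theta(\lambda+\theta)^{\rho}\big)$.

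The main computational step is then to confirm that $\varphi_{\lambda,\rho}$ in (\ref{fl}) indeed has this Laplace transform. I would compute $\widetilde{\varphi}_{\lambda,\rho}(\theta)=\int_0^{\infty}e^{-\theta t}\,\Gamma(\rho;\lambda t)/\Gamma(\rho)\,dt$ by inserting the integral definition $\Gamma(\rho;\lambda t)=\int_{\lambda t}^{\infty}e^{-w}w^{\rho-1}\,dw$, exchanging the order of integration (justified by positivity via Tonelli), and evaluating. After the substitution $w=\lambda r$ and carrying out the inner $t$-integral, the double integral collapses to an elementary expression; the standard identity $\int_0^{\infty}e^{-\theta t}\Gamma(\rho;\lambda t)\,dt=\Gamma(\rho)\big(1-(\lambda/(\lambda+\theta))^{\rho}\big)/\theta$ should emerge, which matches the right-hand side of the display above once one writes $1-(\lambda/(\lambda+\theta))^{\rho}=\big((\lambda+\theta)^{\rho}-\lambda^{\rho}\big)/(\lambda+\theta)^{\rho}$. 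This matching is the heart of the proof.

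Two subsidiary checks complete the argument. I would verify the initial condition $\varphi_{\lambda,\rho}(0)=\Gamma(\rho;0)/\Gamma(\rho)=\Gamma(\rho)/\Gamma(\rho)=1$, which is immediate from the definition of the upper-incomplete gamma function. I would also confirm that $\varphi_{\lambda,\rho}$ satisfies the growth and absolute-continuity hypotheses required for (\ref{temp}) and (\ref{lapconv}) to apply: since $\Gamma(\rho;\lambda t)$ is bounded by $\Gamma(\rho)$ and decreasing, boundedness (hence the bound $|u(t)|\le ce^{kt}$) is trivial, and $\tfrac{d}{dt}\varphi_{\lambda,\rho}(t)=-\lambda^{\rho}e^{-\lambda t}(\lambda t)^{\rho-1}/\Gamma(\rho)$ is locally integrable, giving absolute continuity.

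The step I expect to be the main obstacle is the justification of the Laplace-inversion/uniqueness argument rather than the direct computation. Showing that the transform $\widetilde{\varphi}$ determined by (\ref{planLT}) corresponds to a \emph{unique} solution of the integro-differential equation requires that the transform rule (\ref{lapconv}) be invertible on the relevant function class, i.e.\ that two solutions with the same Laplace transform and the same initial datum coincide. An alternative that sidesteps uniqueness concerns is to substitute (\ref{fl}) directly into (\ref{temp}), compute $\tfrac{d}{dt}\varphi_{\lambda,\rho}(t-s)$ explicitly, and evaluate the resulting convolution $\int_0^t \tfrac{d}{dt}\varphi_{\lambda,\rho}(t-s)\,\Gamma(-\rho;\lambda s)\,ds$ against $-\lambda^{\rho}\varphi_{\lambda,\rho}(t)$; this is more laborious, as it demands an exchange-of-integration identity relating the convolution of $\Gamma(-\rho;\lambda\,\cdot)$ with an exponential-power kernel back to $\Gamma(\rho;\lambda t)$, but it yields the result by direct verification. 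I would present the Laplace-transform version as the primary argument and treat the growth/regularity conditions as the technical points needing care.
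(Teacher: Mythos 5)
Your proposal follows essentially the same route as the paper's own proof: apply the Laplace transform rule (\ref{lapconv}) with $g(\theta )=(\lambda +\theta )^{\rho }-\lambda ^{\rho }$, solve the transformed equation to get $\widetilde{u}(\theta )=\big( (\lambda +\theta )^{\rho }-\lambda ^{\rho }\big) /\big( \theta (\lambda +\theta )^{\rho }\big) $, and then verify, by exchanging the order of integration, that $\Gamma (\rho ;\lambda t)/\Gamma (\rho )$ has exactly this transform. Your subsidiary checks (initial condition, growth and absolute-continuity hypotheses, uniqueness of Laplace inversion) are points the paper passes over silently, but they do not alter the argument.
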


\begin{proof}
The Laplace transform of the l.h.s. of (\ref{temp2}) can be obtained from (%
\ref{lapconv}), by choosing $g(x)=(\lambda +x)^{\rho }-\lambda
^{\rho }$ and
considering the initial condition:%
\begin{equation}
\int_{0}^{+\infty }e^{-\theta t}\mathcal{D}_{t}^{\lambda ,\rho
}u(t)dt=\left[
(\lambda +\theta )^{\rho }-\lambda ^{\rho }\right] \widetilde{u}(\theta )-%
\frac{(\lambda +\theta )^{\rho }-\lambda ^{\rho }}{\theta }.
\label{templap}
\end{equation}%

Then, by taking into account the r.h.s. of (\ref{temp2}), we get%
\begin{equation}
\widetilde{u}(\theta )=\frac{(\lambda +\theta )^{\rho }-\lambda ^{\rho }}{%
\theta (\lambda +\theta )^{\rho }}  \label{temp3}
\end{equation}%
which coincides with the Laplace transform of (\ref{fl}), since%
\begin{eqnarray*}
\frac{1}{\Gamma (\rho )}\int_{0}^{+\infty }e^{-\theta t}\Gamma
\left( \rho ;\lambda t\right) dt &=&\frac{1}{\Gamma (\rho
)}\int_{0}^{+\infty
}e^{-\theta t}\int_{\lambda t}^{+\infty }e^{-w}w^{\rho -1}dwdt \\
&=&\frac{\lambda ^{\rho }}{\Gamma (\rho )}\int_{0}^{+\infty
}e^{-\lambda
w}w^{\rho -1}\int_{0}^{w}e^{-\theta t}dtdw \\
&=&\frac{\lambda ^{\rho }}{\theta \Gamma (\rho )}\int_{0}^{+\infty
}e^{-\lambda w}(1-e^{-\theta w})w^{\rho -1}dw.
\end{eqnarray*}
\end{proof}

We now derive some properties of $\varphi _{\lambda ,\rho }(\cdot
)$, which are usually required to a relaxation function, and
obtain its spectral distribution in terms of H-function. The
latter will be proved to coincide with the density of the process
$\mathcal{X}_{\alpha ,\rho }$ defined in the next section (in the
special case $\alpha =1$).

\begin{theorem}\label{Th1}
The tempered relaxation function $\varphi _{\lambda ,\rho }(\cdot
)$, defined in (\ref{fl}) is $\mathcal{C}^{\infty }$, completely
monotone and locally integrable. The spectral distribution of
$\varphi _{\lambda ,\rho
}(\cdot )$ is given by%
\begin{equation}
K_{\lambda }(z)=\frac{\lambda 1_{z>\lambda }}{\Gamma (\rho )}{\LARGE G}%
_{1,1}^{1,0}\left[ \left. \frac{\lambda }{z}\right\vert
\begin{array}{c}
2 \\
1+\rho%
\end{array}%
\right] ,\qquad \rho \in (0,1),  \label{k}
\end{equation}%
where ${\LARGE G}_{p,q}^{m,n}[\cdot ]$ is the Meijer's
$G$-function defined in (\ref{mei}).
\end{theorem}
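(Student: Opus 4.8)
The plan is to dispatch the three analytic properties directly from the closed form and then to concentrate on the spectral density. By the fundamental theorem of calculus,
\[
\varphi_{\lambda,\rho}'(t)=\frac{1}{\Gamma(\rho)}\,\frac{d}{dt}\int_{\lambda t}^{+\infty}e^{-w}w^{\rho-1}\,dw=-\frac{\lambda^{\rho}}{\Gamma(\rho)}\,t^{\rho-1}e^{-\lambda t},
\]
which is $\mathcal{C}^{\infty}$ on $(0,+\infty)$; differentiating repeatedly shows $\varphi_{\lambda,\rho}\in\mathcal{C}^{\infty}$. For complete monotonicity I would note that $-\varphi_{\lambda,\rho}'$ is the product of the two completely monotone factors $t^{\rho-1}$ (CM because $\rho-1\le 0$) and $e^{-\lambda t}$, hence is CM; since $\varphi_{\lambda,\rho}\ge 0$ and $(-1)^{n}\varphi_{\lambda,\rho}^{(n)}=(-1)^{n-1}\bigl(-\varphi_{\lambda,\rho}'\bigr)^{(n-1)}\ge 0$ for every $n\ge 1$, the function $\varphi_{\lambda,\rho}$ is itself CM. Local integrability is immediate from $0\le\varphi_{\lambda,\rho}(t)\le\varphi_{\lambda,\rho}(0)=1$.

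Complete monotonicity, together with the uniqueness in the Bochner--Bernstein representation (\ref{kir})--(\ref{rs}) for $n=1$, guarantees a unique spectral density $K_{\lambda}$ with $\varphi_{\lambda,\rho}(t)=\int_{0}^{+\infty}e^{-zt}K_{\lambda}(z)\,dz$, so the problem reduces to inverting this Laplace transform. The cleanest route I would take differentiates the representation, giving $-\varphi_{\lambda,\rho}'(t)=\int_{0}^{+\infty}z\,e^{-zt}K_{\lambda}(z)\,dz$, and expands the elementary right-hand side via the classical subordination identity $t^{\rho-1}=\frac{1}{\Gamma(1-\rho)}\int_{0}^{+\infty}e^{-ut}u^{-\rho}\,du$. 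Shifting the integration variable by $\lambda$ — precisely the step that forces the support $z>\lambda$ — then yields
\[
-\varphi_{\lambda,\rho}'(t)=\int_{\lambda}^{+\infty}e^{-zt}\,\frac{\lambda^{\rho}(z-\lambda)^{-\rho}}{\Gamma(\rho)\Gamma(1-\rho)}\,dz,
\]
and matching integrands identifies $z\,K_{\lambda}(z)=\lambda^{\rho}(z-\lambda)^{-\rho}\mathbf 1_{z>\lambda}/[\Gamma(\rho)\Gamma(1-\rho)]$. Equivalently, one could recover the same $K_{\lambda}$ by a Stieltjes--Perron inversion of the transform (\ref{temp3}), reading off the jump of $(\lambda+\theta)^{\rho}$ across its branch cut $\theta<-\lambda$.

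It then remains to recast the elementary kernel in the Meijer form (\ref{k}). Setting $x=\lambda/z\in(0,1)$ and using
\[
G_{1,1}^{1,0}\left[\left. x\right\vert
\begin{array}{c}
2 \\
1+\rho
\end{array}
\right]=\frac{x^{1+\rho}(1-x)^{-\rho}}{\Gamma(1-\rho)},\qquad 0<x<1,
\]
(valid because $2-(1+\rho)=1-\rho>0$, and equal to $0$ for $x>1$ by (\ref{mei2})) converts the explicit density into the stated $G$-function after collecting the powers of $\lambda$ and $z$. The indicator $\mathbf 1_{z>\lambda}$ is exactly the vanishing (\ref{mei2}), while the non-negativity of $K_{\lambda}$ is manifest from the closed form (equivalently from (\ref{mei3}), since $t^{1+\rho}-t^{2}\ge 0$ on $(0,1)$); as a further check, letting $\rho\uparrow 1$ collapses $(z-\lambda)^{-\rho}/\Gamma(1-\rho)$ to the point mass at $z=\lambda$, recovering the exponential relaxation (\ref{rr}).

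I expect the crux to be this Laplace inversion and its identification with $G_{1,1}^{1,0}$: one must justify differentiating under the spectral integral, apply the subordination identity on the admissible range $\rho\in(0,1)$, and then match the elementary kernel to the Mellin--Barnes closed form while tracking the exact powers of $\lambda$ and the branch/support condition. By comparison, the smoothness, monotonicity and integrability claims are routine.
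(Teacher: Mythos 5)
Your argument is correct in substance but follows a genuinely different route from the paper. The paper proves the spectral representation by pure $H$-function calculus: it rewrites $K_{\lambda}$ as an ${\LARGE H}_{1,1}^{0,1}$ function, applies the Laplace-transform formula (2.19) of \cite{MAT}, and then uses the shift and inversion identities (1.58), (1.60) to reduce the result to the Mellin--Barnes integral (\ref{MB}) of the incomplete gamma function. You instead invert the Laplace transform by hand: differentiating the Bochner representation, writing $-\varphi_{\lambda ,\rho }'(t)=\lambda ^{\rho }t^{\rho -1}e^{-\lambda t}/\Gamma (\rho )$, expanding $t^{\rho -1}=\frac{1}{\Gamma (1-\rho )}\int_{0}^{+\infty }e^{-ut}u^{-\rho }du$ and shifting by $\lambda $ to get the elementary kernel $zK_{\lambda }(z)=\lambda ^{\rho }(z-\lambda )^{-\rho }1_{z>\lambda }/[\Gamma (\rho )\Gamma (1-\rho )]$, with the $G$-function form recovered from the beta-type evaluation of $G_{1,1}^{1,0}$. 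This is more elementary and more informative: it yields a closed-form density for every $\rho \in (0,1)$, of which the paper's $\rho =1/2$ remark is a special case, and it makes the support condition $z>\lambda $ transparent. Your complete-monotonicity argument (product of the CM factors $t^{\rho -1}$ and $e^{-\lambda t}$, then integrating back up) is also cleaner than the paper's term-by-term sign bookkeeping, and boundedness indeed suffices for local integrability as stated, whereas the paper proves the stronger integrability on all of $\mathbb{R}^{+}$ via (\ref{ttt}).

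There is, however, one point you should have caught: your own algebra does not reproduce the prefactor printed in (\ref{k}). Substituting $x=\lambda /z$ into your (correct) evaluation of $G_{1,1}^{1,0}$ with parameters $2$ and $1+\rho $ gives $\lambda ^{1+\rho }z^{-1}(z-\lambda )^{-\rho }/\Gamma (1-\rho )$, so your kernel equals $\frac{1}{\lambda \Gamma (\rho )}$ times the $G$-function, not $\frac{\lambda }{\Gamma (\rho )}$ times it: the two differ by a factor $\lambda ^{2}$. Your kernel is the right one --- it integrates to $1$ and collapses to $\delta (z-\lambda )$ as $\rho \uparrow 1$, while the printed (\ref{k}) integrates to $\lambda ^{2}$. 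The paper itself is inconsistent here: its verification silently switches from the prefactor $\frac{\lambda }{\Gamma (\rho )}$ to $\frac{1}{\lambda \Gamma (\rho )}$ in the display where formula (2.19) of \cite{MAT} is applied, the normalization claim $\int_{0}^{+\infty }K_{\lambda }(z)dz=1$ in the subsequent remark holds only for the $1/\lambda $ version, and the $\rho =1/2$ formula there carries $\lambda ^{5/2}$ where your kernel gives $\lambda ^{1/2}$. So your proof is mathematically sound, but the sentence asserting that collecting powers of $\lambda $ ``converts the explicit density into the stated $G$-function'' is false as written: you should state explicitly that you obtain (\ref{k}) with the prefactor $\frac{1}{\lambda \Gamma (\rho )}$ in place of $\frac{\lambda }{\Gamma (\rho )}$, i.e.\ that the printed formula contains a typo in the power of $\lambda $.
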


\begin{proof}
It is well-known that the upper incomplete gamma function is $\mathcal{C}%
^{\infty }$ and thus the same holds true for $\varphi _{\lambda
,\rho }(\cdot ).$ Moreover, we can derive complete monotonicity by
repeatedly
differentiating (\ref{fl}) and showing that $(-1)^{k}\frac{d^{k}}{dt^{k}}%
\varphi _{\lambda ,\rho }(t)\geq 0,$ for any $k\in \mathbb{N}$ and $t\geq 0$:%
\begin{eqnarray*}
-\frac{d}{dt}\varphi _{\lambda ,\rho }(t) &=&\lambda ^{\rho
}e^{-\lambda
t}t^{\rho -1}\geq 0 \\
\frac{d^{2}}{dt^{2}}\varphi _{\lambda ,\rho }(t) &=&\lambda ^{\rho
+1}e^{-\lambda t}t^{\rho -1}-(\rho -1)\lambda ^{\rho }e^{-\lambda
t}t^{\rho
-2}\geq 0 \\
-\frac{d^{3}}{dt^{3}}\varphi _{\lambda ,\rho }(t) &=&\lambda
^{\rho +2}e^{-\lambda t}t^{\rho -1}-2(\rho -1)\lambda ^{\rho
+1}e^{-\lambda t}t^{\rho -2}+\\
&+&(\rho -1)(\rho -2)\lambda ^{\rho }e^{-\lambda t}t^{\rho -3}\geq
0
\end{eqnarray*}%
and so on. The sign of the $k$-derivative depends only on $\rho -j+1,$ for $%
j=2,...,k$, which is non-positive, for any $k\in \mathbb{N}$ and
for $\rho \leq 1.$ As far as the integrability, we first derive an
alternative expression of (\ref{fl}) in terms of Mittag-Leffler
function: recalling formula (3.7) p.316 in \cite{MIL} and formula
(4.2.8) in \cite{GOR}, we can
write%
\begin{eqnarray}
\varphi _{\lambda ,\rho }(t) &=&1-\frac{\int_{0}^{\lambda
t}e^{-w}w^{\rho
-1}dw}{\Gamma \left( \rho \right) }  \label{tt} \\
&=&1-e^{-\lambda t}\lambda ^{\rho }\mathcal{E}_{t}(\rho ,\lambda
)=1-e^{-\lambda t}(\lambda t)^{\rho }E_{1,\rho +1}(\lambda t),
\notag
\end{eqnarray}%
where $\mathcal{E}_{x}(\nu ,\lambda )=\frac{e^{\lambda x}}{\Gamma (\nu )}%
\int_{0}^{x}w^{\nu -1}e^{-w}dw$ is the so-called Miller-Ross
function.

By applying formula (6.5.31) in \cite{WYL}, we easily obtain the
following
limiting behavior of (\ref{fl})%
\begin{equation}
\varphi _{\lambda ,\rho }(t)\simeq \frac{e^{-\lambda t}\lambda
^{\rho -1}t^{\rho -1}}{\Gamma (\rho )},\qquad t\rightarrow +\infty
.  \label{ttt}
\end{equation}%

Since $\varphi _{\lambda ,\rho }(0)=1$ and the function $\varphi
_{\lambda ,\rho }(\cdot )$. is bounded by one on any compact set,
(\ref{ttt}) is
enough to prove its integrability. Finally, we check that%
\begin{equation}
\varphi _{\lambda ,\rho }(t)=\int_{0}^{+\infty }e^{-tz}K_{\lambda
}(z)dz, \label{kk}
\end{equation}%
for $K(z)$ given in (\ref{k}). Indeed we can write (\ref{k}), by (\ref{mei}%
), in terms of a $H$-functions, as%
\begin{eqnarray*}
K_{\lambda }(z) &=&\frac{\lambda }{\Gamma (\rho )}{\LARGE H}_{1,1}^{1,0}%
\left[ \left. \frac{\lambda }{z}\right\vert
\begin{array}{c}
(2,1) \\
(1+\rho ,1)%
\end{array}%
\right] \\
&=&[\text{ by(1.60) in \cite{MAT}}]=\frac{\lambda }{\Gamma (\rho )}{\LARGE H}%
_{1,1}^{0,1}\left[ \left. \frac{z}{\lambda }\right\vert
\begin{array}{c}
(-\rho ,1) \\
(-1,1)%
\end{array}%
\right] ,
\end{eqnarray*}%
so that we can apply formula (2.19) in \cite{MAT} and get%
\begin{eqnarray*}
&&\frac{1}{\lambda \Gamma (\rho )}\int_{0}^{+\infty }e^{-tz}{\LARGE H}%
_{1,1}^{0,1}\left[ \left. \frac{z}{\lambda }\right\vert
\begin{array}{c}
(-\rho ,1) \\
(-1,1)%
\end{array}%
\right] dz \\
&=&\frac{1}{\Gamma (\rho )\lambda t}{\LARGE H}_{2,1}^{0,2}\left[
\left. \frac{1}{\lambda t}\right\vert
\begin{array}{c}
(0,1)(-\rho ,1) \\
(-1,1)%
\end{array}%
\right] \\
&=&[\text{by (1.58) and (1.60) in \cite{MAT}}]=\frac{1}{\Gamma (\rho )}%
{\LARGE H}_{1,2}^{2,0}\left[ \left. \lambda t\right\vert
\begin{array}{c}
(1,1) \\
(0,1)(\rho ,1)%
\end{array}%
\right] \\
&=&\frac{1}{\Gamma (\rho )}\frac{1}{2\pi i}\int_{\mathcal{L}}(\lambda t)^{-s}%
\frac{\Gamma (s)\Gamma (\rho +s)}{\Gamma (1+s)}ds=\varphi
_{\lambda ,\rho }(t),
\end{eqnarray*}%
by means of the Mellin-Barnes integral form of the incomplete gamma function:%
\begin{equation}
\Gamma (\rho ;x)=\frac{1}{2\pi i}\int_{\mathcal{L}}\frac{\Gamma
(s+\rho )x^{-s}}{\Gamma (\rho )s}ds,  \label{MB}
\end{equation}%
where the contour $\mathcal{L}$ avoids all the poles of the gamma
functions
(see \cite{PAR} for more details). Note that the indicator function in (\ref%
{k}) comes from (\ref{mei2}).
\end{proof}

\begin{remark}
We note that, as $t\rightarrow +\infty ,$ the function $\varphi
_{\lambda ,\rho }(\cdot )$ decays exponentially fast and thus
faster than in the fractional case. On the other hand, for small
values of $t$, the tempered relaxation function exhibits the same
fast decay of the fractional relaxation (see \cite{MAI}): indeed
we can obtain the behavior of $\varphi
_{\lambda ,\rho }(\cdot )$, for $t\rightarrow 0^{+}$, by considering (\ref%
{tt}) together with%
\begin{equation*}
E_{\alpha ,\beta }(z)\simeq \frac{1}{\Gamma (\beta
)}+\frac{z}{\Gamma (\beta +\alpha )},\qquad z\rightarrow 0^{+}
\end{equation*}%
(see \cite{GOR}, p.64): thus we have that
\begin{equation}
\varphi _{\lambda ,\rho }(t)\simeq 1-\frac{e^{-\lambda t}\lambda
^{\rho
}t^{\rho }}{\Gamma (\rho +1)}\simeq 1-\frac{t^{\rho }}{\Gamma (\rho +1)}%
,\qquad t\rightarrow 0^{+}.
\end{equation}

We thus obtain asymptotic behaviors similar to those of the
relaxation function introduced in \cite{STAW} (by time-changing
with inverse tempered subordinators), but, in this case, we have
also a closed expression of the solution, in terms of well-known
functions.
\end{remark}

\begin{remark}
As far as the spectral distribution $K_{\lambda }(z)$, $z\geq 0,$ given in (%
\ref{k}), we can check, by (\ref{mei3}), that it is a proper
probability distribution: indeed it is non-negative, since
$t^{1+\rho }-t^{2}\geq 0$, for $\rho <1$ and $t\in (0,1).$
Furthermore, $\int_{0}^{+\infty }K_{\lambda }(z)dz=1$, by using
(\ref{kk}) and considering that $\varphi _{\lambda ,\rho }(0)=1.$
From (\ref{k}) we can draw the conclusion that the process
governed by $\varphi _{\lambda ,\rho }(t)$ can be expressed in
terms of a continuous distribution of standard (exponential)
relaxation processes with frequencies on the range $(\lambda
,+\infty )$ instead of the whole real positive
semi-axes. In the special case $\rho =1/2$, it is well-known that%
\begin{equation*}
{\LARGE G}_{1,1}^{0,1}\left[ \left. x\right\vert
\begin{array}{c}
-\frac{1}{2} \\
-1%
\end{array}%
\right] =\frac{1_{x>1}}{\sqrt{\pi }x\sqrt{x-1}},
\end{equation*}%
(see (1.143) in \cite{MAT}), so that we get
\begin{equation}
K_{\lambda }(z)=\frac{\lambda ^{5/2}1_{z>\lambda }}{\pi z\sqrt{z-\lambda }}%
{\LARGE .}
\end{equation}
\end{remark}

\begin{remark}
Let $S_{\lambda ,\rho }(t),t\geq 0,$ be the tempered subordinator, for $%
\lambda \geq 0$ and $\rho \in (0,1],$ with transition density
$h_{\lambda ,\rho }(z,t)$. Let moreover $L_{\lambda ,\rho
}(t),t\geq 0,$ be its inverse, i.e. $L_{\lambda ,\rho }(t):=\inf
\{s\geq 0,\,S_{\lambda ,\rho }(s)>t\}$ (see \cite{BEG},
\cite{GAJ2}, \cite{MEE} and \cite{WYL2}, for details). As a
consequence of Lemma \ref{Lemrel} we can derive the following
relationship between the tempered relaxation and the density of
$L_{\lambda ,\rho },$
i.e. $l_{\lambda ,\rho }(x,t):=P\{L_{\lambda ,\rho }(t)\in dx\}$:%
\begin{equation}
\varphi _{\lambda ,\rho }(t)=\frac{\Gamma \left( \rho ;\lambda t\right) }{%
\Gamma (\rho )}=\int_{0}^{+\infty }e^{-\lambda ^{\rho
}z}l_{\lambda ,\rho }(z,t)dz.  \label{inv}
\end{equation}%

The previous integral expression of the incomplete gamma function
can be obtained, by applying (19) of \cite{WYL}. Let us denote by
$h_{\rho }(u,z)$
the density of the $\rho $-stable subordinator (i.e. for $\lambda =0$), then%
\begin{eqnarray*}
&&\int_{0}^{+\infty }e^{-\lambda ^{\rho }z}l_{\lambda ,\rho }(z,t)dz \\
&=&-\int_{0}^{+\infty }e^{-\lambda ^{\rho }z}\frac{\partial }{\partial z}%
\int_{0}^{t}h_{\lambda ,\rho }(u,z)dudz \\
&=&-\lambda ^{\rho }\int_{0}^{t}e^{-\lambda u}\int_{0}^{+\infty
}e^{-\lambda ^{\rho }z+\lambda ^{\rho }z}h_{\rho
}(u,z)dzdu+\int_{0}^{t}h_{\lambda ,\rho
}(u,0)du \\
&=&-\lambda ^{\rho }\int_{0}^{t}e^{-\lambda u}\int_{0}^{+\infty
}h_{\rho }(u,z)dzdu+1,
\end{eqnarray*}%
for $t>0,$ since $h_{\lambda ,\rho }(u,0)=\delta (u).$ We now notice that $%
\int_{0}^{+\infty }h_{\rho }(u,z)dz=u^{\rho -1}/\Gamma \left( \rho
\right) $ (as can be checked by the Laplace transform) and the
result (\ref{inv}) easily follows.

The relationship (\ref{inv}) can be also checked by considering
that both sides satisfy (\ref{temp2}): by taking the tempered
derivative of the r.h.s.
of (\ref{inv}):%
\begin{eqnarray*}
&&\mathcal{D}_{t}^{\lambda ,\rho }\left[ \Gamma \left( \rho
\right) \int_{0}^{+\infty }e^{-\lambda ^{\rho }z}l_{\lambda ,\rho
}(z,t)dz\right]
=\Gamma \left( \rho \right) \int_{0}^{+\infty }e^{-\lambda ^{\rho }z}%
\mathcal{D}_{t}^{\lambda ,\rho }l_{\lambda ,\rho }(z,t)dz \\
&=&[\text{by (6.13) and (2.33) in \cite{TOA}}] \\
&=&-\Gamma \left( \rho \right) \int_{0}^{+\infty }e^{-\lambda ^{\rho }z}%
\frac{\partial }{\partial z}l_{\lambda ,\rho }(z,t)dz-\Gamma
\left( \rho \right) \nu (t)\int_{0}^{+\infty }e^{-\lambda ^{\rho
}z}l_{\lambda ,\rho
}(z,0)dz \\
&=&-\left[ \Gamma \left( \rho \right) e^{-\lambda ^{\rho
}z}l_{\lambda ,\rho }(z,t)\right] _{z=0}^{+\infty }-\Gamma \left(
\rho \right) \nu(t)-%
\Gamma \left( \rho \right)\lambda ^{\rho } \int_{0}^{+\infty
}e^{-\lambda
^{\rho }z}l_{\lambda ,\rho }(z,t)dz \\
&=&-\Gamma \left( \rho \right) \lambda ^{\rho }\int_{0}^{+\infty
}e^{-\lambda ^{\rho }z}l_{\lambda ,\rho }(z,t)dz,
\end{eqnarray*}%
where we have considered the initial conditions $l_{\lambda ,\rho }(0,t)=%
\frac{\rho \lambda ^{\rho }\Gamma (-\rho ,\lambda t)}{\Gamma
(1-\rho )}$ and $l_{\lambda ,\rho }(x,0)=\delta (x)$ (see
\cite{TOA}, for details).
\end{remark}

\section{Generalized stable process}\label{sec:3}

\setcounter{section}{3}
\setcounter{equation}{0}\setcounter{theorem}{0}

We recall that the Laplace transform of the $\alpha $-stable
subordinator,
i.e. $\Phi (\eta ):=\mathbb{E}e^{-\eta S_{\alpha }(t)}=e^{-\eta ^{\alpha }t}$%
, $t\geq 0,$ satisfies the standard relaxation equation (\ref{rr}), with $%
\lambda =\eta ^{\alpha }.$ We then apply the previous results, in
order to define a new stochastic process, which generalizes the
$\alpha $-stable subordinator.

We first prove that, under some assumptions, the function
\begin{equation}
\Phi (\eta _{1},...\eta _{n};t_{1},...t_{n})=\frac{\Gamma \left(
\rho ;\sum\limits_{k=1}^{n}\Xi _{k,n}^{\alpha}\left(
t_{k}-t_{k-1}\right) \right) }{\Gamma \left( \rho \right) },\qquad
\eta _{1},...\eta _{n}\geq 0, n \in \mathbb{N},  \label{chi}
\end{equation}%
where $\Xi _{k,n}^{\alpha}:=\left(\sum_{i=k}^{n}\eta
_{i}\right)^{\alpha}$, can be uniquely
represented as a Laplace transform of a probability measure in $\mathbb{R}%
^{n}.$

\begin{lemma}
\label{LemCM} Let $0<t_{1}<....<t_{n},$ $\alpha \in (0,1)$ and
$\rho \in (0,1].$ The function $\Phi (\eta _{1},...\eta
_{n};t_{1},...t_{n})$ in (\ref{chi}) is $C^{\infty },$ is such
that $\Phi
(0,..,0;t_{1},...t_{n})=1 $ and is completely monotone (CM) with respect to $%
\eta _{1},...\eta _{n},$ for any choice of $t_{1}<...<t_{n}$. Thus
the
following integral (unique) representation holds%
\[
\Phi (\eta _{1},...\eta _{n};t_{1},...t_{n})=\int_{0}^{+\infty
}...\int_{0}^{+\infty }e^{-\eta _{1}x_{1}...-\eta _{n}x_{n}}\pi
(dx_{1},...,dx_{n};t_{1},...t_{n}),
\]%
for a probability measure $\pi (\cdot ,...,\cdot
;t_{1},...t_{n})$.
\end{lemma}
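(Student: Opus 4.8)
The plan is to split the statement into the three asserted properties and to obtain the integral representation directly from Bochner's criterion (\ref{kir})--(\ref{rs}). The smoothness and the normalization are immediate: the argument $\sum_{k=1}^{n}\Xi_{k,n}^{\alpha}(t_{k}-t_{k-1})$ is a $C^{\infty}$ function of $(\eta_{1},\dots,\eta_{n})$ on the open octant and $x\mapsto\Gamma(\rho;x)$ is $C^{\infty}$, so $\Phi$ is $C^{\infty}$ by composition; setting all $\eta_{k}=0$ makes the argument vanish, whence $\Phi(0,\dots,0;t_{1},\dots,t_{n})=\Gamma(\rho;0)/\Gamma(\rho)=1$. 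Thus the whole burden falls on proving complete monotonicity in $(\eta_{1},\dots,\eta_{n})$; once this is established, (\ref{kir})--(\ref{rs}) yields the (unique) representation, and the value $1$ at the origin forces the representing measure $\pi$ to have total mass one, i.e.\ to be a probability measure.

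The key idea is to avoid differentiating $\Phi$ altogether and instead build the representing measure explicitly, using the spectral representation of Theorem \ref{Th1}. In the case $\lambda=1$, (\ref{k})--(\ref{kk}) read $\Gamma(\rho;x)/\Gamma(\rho)=\int_{0}^{+\infty}e^{-xz}K_{1}(z)\,dz$, with $K_{1}$ a probability density (nonnegativity uses $\rho\le1$ through (\ref{mei3})). Substituting $x=\sum_{k=1}^{n}\Xi_{k,n}^{\alpha}(t_{k}-t_{k-1})$ and exploiting that the exponential factorizes over the increments $t_{k}-t_{k-1}>0$, I would write
\begin{equation*}
\Phi(\eta_{1},\dots,\eta_{n};t_{1},\dots,t_{n})=\int_{0}^{+\infty}\Bigl(\prod_{k=1}^{n}\exp\bigl\{-z(t_{k}-t_{k-1})(\textstyle\sum_{i=k}^{n}\eta_{i})^{\alpha}\bigr\}\Bigr)K_{1}(z)\,dz ,
\end{equation*}
which reduces the problem to showing that, for each fixed $z\ge0$, the integrand is a completely monotone function of $(\eta_{1},\dots,\eta_{n})$.

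For each factor I would invoke the Laplace exponent of the stable subordinator (\ref{dd}): with $c_{k}:=z(t_{k}-t_{k-1})$ and $w_{k}:=\sum_{i=k}^{n}\eta_{i}\ge0$,
\begin{equation*}
\exp\{-c_{k}w_{k}^{\alpha}\}=\mathbb{E}\,e^{-w_{k}\mathcal{S}_{\alpha}(c_{k})}=\int_{0}^{+\infty}\Bigl(\prod_{i=k}^{n}e^{-s\eta_{i}}\Bigr)h_{\alpha}(s;c_{k})\,ds ,
\end{equation*}
which exhibits each factor as the Laplace transform, on $\mathbb{R}^{n}_{+}$, of a nonnegative measure (the stable law $h_{\alpha}(\cdot;c_{k})$ pushed onto the ray $x_{k}=\dots=x_{n}=s$, $x_{j}=0$ for $j<k$). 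A product of such Laplace transforms is again the Laplace transform of the convolution of the underlying measures, and the outer $z$-integration against the probability density $K_{1}$ produces a single nonnegative measure $\pi(\cdot;t_{1},\dots,t_{n})$ with $\Phi$ as its Laplace transform. By Bochner's criterion this simultaneously proves complete monotonicity and delivers the asserted representation, with $\pi$ a probability measure by the normalization above; uniqueness is exactly the uniqueness in (\ref{kir}).

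The main obstacle is the complete monotonicity, and the decisive observation that tames it is the factorization of the exponential over the increments $t_{k}-t_{k-1}$ together with the linearity of the inner sums $w_{k}=\sum_{i\ge k}\eta_{i}$: this is what turns one multivariate computation into a product of one-dimensional stable Laplace transforms, each trivially completely monotone. The remaining points are routine but must be checked: the interchange of the $z$-integral with the $\eta$-derivatives (equivalently, the legitimacy of manipulating the Laplace representations), justified by dominating each factor by $1$ and by the integrability of $K_{1}$; and the two structural hypotheses, namely $\alpha\in(0,1)$ (so that $\theta\mapsto\theta^{\alpha}$ is Bernstein and the subordinator in (\ref{dd}) exists) and $\rho\in(0,1]$ (so that $K_{1}\ge0$ via (\ref{mei3})). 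As a cross-check I would note the purely analytic alternative: $x\mapsto\Gamma(\rho;x)/\Gamma(\rho)$ is completely monotone on $(0,\infty)$ by Theorem \ref{Th1}, each $w_{k}^{\alpha}$ is a Bernstein function of $(\eta_{1},\dots,\eta_{n})$, and a nonnegative combination of Bernstein functions is Bernstein, so that $\Phi$ is the composition of a completely monotone function with a Bernstein function and hence completely monotone.
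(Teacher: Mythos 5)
Your proposal is correct in substance but takes a genuinely different route from the paper's. The paper proves complete monotonicity by direct differentiation: for $n=1$ it composes the CM function $\Gamma (\rho ;\cdot )$ (Theorem \ref{Th1}) with the Bernstein function $\eta ^{\alpha }$, and for $n=2$ it expands the mixed partial derivatives of $\Gamma (\rho ;A_{1,2})$, $A_{1,2}=(\eta _{1}+\eta _{2})^{\alpha }t_{1}+\eta _{2}^{\alpha }(t_{2}-t_{1})$, by the general Leibniz formula, checks that every term in the expansion has the right sign (using that $A_{1,2}$ is Bernstein while $e^{-A_{1,2}}$ and $A_{1,2}^{\rho -1}$ are CM), and asserts that the procedure iterates to all $n\geq 3$; only then does Bochner's criterion (\ref{kir})--(\ref{rs}) produce an abstract representing measure. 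You instead build $\pi $ explicitly: the spectral representation (\ref{k})--(\ref{kk}) with $\lambda =1$ writes $\Phi $ as a $K_{1}$-mixture of products $\prod_{k}\exp \{-z(t_{k}-t_{k-1})w_{k}^{\alpha }\}$, and each factor is, by (\ref{dd}), the Laplace transform of a stable law pushed onto a diagonal ray of $\mathbb{R}_{+}^{n}$; convolving over $k$ and mixing over $z$ yields a nonnegative measure whose Laplace transform is $\Phi $, so CM follows from the easy direction of Bochner's theorem. Your route buys a uniform treatment of all $n$ (no ``and so on'' induction over Leibniz expansions), the probability normalization of $\pi $ for free (convolution and mixing preserve total mass one), and, as a byproduct, a probabilistic identification the paper never states: conditionally on $Z=z\sim K_{1}$ the finite-dimensional laws are those of a stable subordinator run at speed $z$, i.e.\ $\mathcal{X}_{\alpha ,\rho }$ is a scale mixture $Z^{1/\alpha }S_{\alpha }(\cdot )$. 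What the paper's route buys is independence from the explicit spectral density: it needs only the univariate CM property of $\Gamma (\rho ;\cdot )$, not formula (\ref{k}).

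One boundary case needs repair. Formula (\ref{k}) and the nonnegativity argument via (\ref{mei3}) (which requires $t^{1+\rho }-t^{2}\geq 0$ on $(0,1)$) are stated only for $\rho \in (0,1)$, whereas the lemma covers $\rho \in (0,1]$; your parenthetical ``nonnegativity uses $\rho \leq 1$'' overstates what Theorem \ref{Th1} provides. At $\rho =1$ the spectral measure degenerates: $\varphi _{1,1}(t)=e^{-t}$, so $K_{1}(z)\,dz$ must be replaced by the Dirac mass $\delta _{1}(dz)$, after which your mixture argument goes through verbatim and reduces to the stable-subordinator case. Also note that your closing ``cross-check'' (CM composed with a multivariate Bernstein function is CM) is not independent of the paper's argument: the multivariate composition theorem it invokes is precisely what the paper's Leibniz computation verifies by hand, so if you rely on it you should cite it (e.g.\ the multivariate analogue of Theorem 3.7 in \cite{SCH}) rather than treat it as obvious.
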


\begin{proof}
We first check that the one-dimensional function $\Phi (\eta ,t)=\mathbb{E}%
e^{-\eta \mathcal{X}_{\alpha ,\rho }(t)}$ given in (\ref{mlt}) is
$C^{\infty }$, since the incomplete gamma function is analytic
w.r.t. both its arguments. Moreover, it is such that $\Phi
(0,t)=1$ and is a CM function, i.e is absolutely differentiable
and $(-1)^{k}\frac{d^{k}}{d\eta ^{k}}\Phi (\eta ,t)\geq 0$, for
$k=0,1,....$ Indeed, $\eta ^{\alpha }$ is a Bernstein function and
we have proved in Theorem \ref{Th1} that $\Gamma (\rho ;\cdot )$
is CM. Then it is well-known that the composition of a CM and a
Bernstein function is again CM. Thus the statement is proved for
$n=1.$ Moreover, we can check directly that this is true if and
only if $\alpha \rho <1$:
\begin{eqnarray*}
&&\frac{d}{d\eta }\Phi (\eta ,t)=-\alpha t^{\rho }\eta ^{\alpha
\rho-1}e^{-\eta ^{\alpha }t}\leq 0 \\
&&\frac{d^{2}}{d\eta ^{2}}\Phi (\eta ,t)=-\alpha (\alpha \rho
-1)t^{\rho }\eta ^{\alpha \rho -2}e^{-\eta ^{\alpha }t}+\alpha
^{2}t^{\rho +1}\eta
^{\alpha \rho +\alpha -2}e^{-\eta ^{\alpha }t}\geq 0 \\
&&\frac{d^{3}}{d\eta ^{3}}\Phi (\eta ,t)=-\alpha (\alpha \rho
-1)\left( \alpha \rho -2\right) t^{\rho }\eta ^{\alpha \rho
-3}e^{-\eta ^{\alpha }t}+(\alpha \rho -1)\alpha ^{2}t^{\rho
+1}\eta ^{\alpha \rho +\alpha
-3}e^{-\eta ^{\alpha }t}+ \\
&&+\alpha ^{2}t^{\rho +1}(\alpha \rho +\alpha -2)\eta ^{\alpha
\rho +\alpha -3}e^{-\eta ^{\alpha }t}-\alpha ^{3}t^{\rho +2}\eta
^{\alpha \rho +2\alpha -3}e^{-\eta ^{\alpha }t}\leq 0
\end{eqnarray*}%
and so on. The sign of the $k$-th derivatives depends only on
$\alpha \rho -j+1,$ for $j=2,...k,$ and $\alpha \rho +(j-2)\alpha
-j+1,$ for $j=3,...k,$ which are all non-positive under the unique
condition $\alpha \rho <1$.

For $n>1,$ we start by proving that the condition (\ref{rs}) holds, for $%
n=2, $ i.e. that
\begin{eqnarray*}
&&(-1)^{k_{1}+k_{2}}\frac{\partial ^{k_{1}+k_{2}}}{\partial \eta
_{1}^{k_{1}}\partial \eta _{2}^{k_{2}}}\Phi (\eta _{1},\eta
_{2};t_{1},t_{2})
\\
&=&(-1)^{k_{1}+k_{2}}\frac{\partial ^{k_{1}+k_{2}}}{\partial \eta
_{1}^{k_{1}}\partial \eta _{2}^{k_{2}}}\frac{\Gamma \left( \rho
;(\eta
_{1}+\eta _{2})^{\alpha }t_{1}+\eta _{2}^{\alpha }(t_{2}-t_{1})\right) }{%
\Gamma \left( \rho \right) }\geq 0\
\end{eqnarray*}

Let us denote $A_{1,2}\doteq (\eta _{1}+\eta _{2})^{\alpha
}t_{1}+\eta
_{2}^{\alpha }(t_{2}-t_{1})$; thus we can write%
\begin{eqnarray*}
&&(-1)^{k_{1}+k_{2}}\frac{\partial ^{k_{1}+k_{2}}}{\partial \eta
_{1}^{k_{1}}\partial \eta _{2}^{k_{2}}}\Phi (\eta _{1},\eta
_{2};t_{1},t_{2})
\\
&=&\frac{(-1)^{k_{1}+k_{2}}}{\Gamma \left( \rho \right)
}\frac{\partial ^{k_{2}}}{\partial \eta
_{2}^{k_{2}}}\frac{\partial ^{k_{1}-1}}{\partial \eta
_{1}^{k_{1}-1}}\frac{\partial }{\partial \eta _{1}}\Gamma \left(
\rho
;A_{1,2}\right) \\
&=&\frac{(-1)^{k_{1}+k_{2}+1}}{\Gamma \left( \rho \right)
}\frac{\partial ^{k_{2}}}{\partial \eta
_{2}^{k_{2}}}\frac{\partial ^{k_{1}-1}}{\partial
\eta _{1}^{k_{1}-1}}\left[ e^{-A_{1,2}}A_{1,2}^{\rho -1}\frac{\partial }{%
\partial \eta _{1}}A_{1,2}\right] \\
&=&[\text{by the general Leibniz formula}] \\
&=&\frac{(-1)^{k_{2}}}{\Gamma \left( \rho \right) }\frac{\partial ^{k_{2}}}{%
\partial \eta _{2}^{k_{2}}}{\LARGE [}\sum_{j_{1}+j_{2}+j_{3}=k_{1}-1}\binom{%
k_{1}-1}{j_{1},j_{2},j_{3}}(-1)^{j_{1}}\frac{\partial
^{j_{1}}}{\partial
\eta _{1}^{j_{1}}}e^{-A_{1,2}}\cdot \\
&&\cdot (-1)^{j_{2}}\frac{\partial ^{j_{2}}}{\partial \eta _{1}^{j_{2}}}%
A_{1,2}^{\rho -1}(-1)^{j_{3}+2}\frac{\partial ^{j_{3}+1}}{\partial
\eta
_{1}^{j_{3}+1}}A_{1,2}{\LARGE ]} \\
&=&\frac{1}{\Gamma \left( \rho \right) }\sum_{l_{1}+l_{2}+l_{3}=k_{2}}\binom{%
k_{2}}{l_{1},l_{2},l_{3}}\sum_{j_{1}+j_{2}+j_{3}=k_{1}-1}\binom{k_{1}-1}{%
j_{1},j_{2},j_{3}}\left[ (-1)^{l_{1}+j_{1}}\frac{\partial ^{l_{1}+j_{1}}}{%
\partial \eta _{2}^{l_{1}}\partial \eta _{1}^{j_{1}}}e^{-A_{1,2}}\right]
\cdot \\
&&\cdot \left[ (-1)^{l_{2}+j_{2}}\frac{\partial
^{l_{2}+j_{2}}}{\partial \eta _{2}^{l_{2}}\partial \eta
_{1}^{j_{2}}}A_{1,2}^{\rho -1}\right] \left[
(-1)^{l_{3}+j_{3}}\frac{\partial ^{l_{3}+j_{3}+1}}{\partial \eta
_{2}^{l_{3}}\partial \eta _{1}^{j_{3}+1}}A_{1,2}\right] .
\end{eqnarray*}

Under the assumptions that $\alpha \in (0,1) $ and $\rho \in (0,1]$ and recalling that $%
t_{1},t_{2}-t_{1}\geq 0,$ it is easy to show that $A_{1,2}$ is a
Bernstein
function w.r.t. $\eta _{1}$ and $\eta _{2}$. On the other hand, $%
A_{1,2}^{\rho -1}$ and $e^{-A_{1,2}}$ are both CM, being the
composition of CM and Bernstein functions (see Theorem 3.7 in
\cite{SCH}, p.27). Moreover it can be checked that, in the last
step, the sign of the partial derivatives depends only on the
order, regardless of the variables involved; thus the functions
inside square brackets are all non-negative. This procedure can be
successively applied in order to prove that (\ref{rs}) is true for
any integer $n\geq 3.$
\end{proof}

\begin{definition}
\label{DefLT} We define the process $\mathcal{X}_{\alpha ,\rho
}:=\left\{ \mathcal{X}_{\alpha ,\rho }(t),t\geq 0\right\} $, under
the assumption that, for $t_{0}=0$, $\mathcal{X}_{\alpha ,\rho
}(t_{0})=0,$ almost surely, by the $n$-times Laplace transform of
its finite dimensional distributions given in (\ref{chi}), i.e.
\begin{equation*}
\mathbb{E}e^{-\sum_{k=1}^{n}\eta _{k}\mathcal{X}_{\alpha ,\rho
}(t_{k})}:=\frac{\Gamma \left( \rho ;\sum\limits_{k=1}^{n}\Xi
_{k,n}^{\alpha}\left( t_{k}-t_{k-1}\right) \right) }{\Gamma \left(
\rho \right) },\qquad \eta _{1},...\eta _{n}\geq 0, n \in
\mathbb{N},
\end{equation*}
for $\Xi _{k,n}^{\alpha}:=\left(\sum_{i=k}^{n}\eta
_{i}\right)^{\alpha}$ and $\alpha \in (0,1)$, $\rho \in (0,1]$.
\end{definition}

\begin{remark}
It is easy to check that $\mathcal{X}_{\alpha ,\rho }$ displays
the self-similar property of order $1/\alpha $, for any $\rho $:
indeed we have the following equality of all the
finite-dimensional distributions $\left\{
a^{\frac{1}{\alpha }}\mathcal{X}_{\alpha ,\rho }(t),t\geq 0\right\} \overset{%
d}{=}\left\{ \mathcal{X}_{\alpha ,\rho }(at),t\geq 0\right\} $, for any $%
a\geq 0$.

On the other hand, it is a L\'{e}vy process only for $\rho =1$,
when its distribution becomes infinitely divisible. Moreover, only
in this special case the process has stationary and independent
increments, since formula (\ref{chi}) can be rewritten as
\begin{eqnarray*}
\Phi (\eta _{1},...\eta _{n};t_{1},...t_{n}) &=&\exp \left\{
\sum\limits_{k=1}^{n}\Xi _{k,n}^{\alpha}\left(
t_{k}-t_{k-1}\right) \right\}
=\prod\limits_{k=1}^{n}e^{-\Xi _{k,n}^{\alpha}\left( t_{k}-t_{k-1}\right) } \\
&=&\prod\limits_{k=1}^{n}\mathbb{E}e^{-\Xi
_{k,n}^{\alpha}\mathcal{X}_{\alpha ,1}\left( t_{k}-t_{k-1}\right)
}=\prod\limits_{k=1}^{n}\mathbb{E}e^{-\Xi
_{k,n}^{\alpha}\left[ \mathcal{X}_{\alpha ,1}(t_{k})-\mathcal{X}_{\alpha ,1}(t_{k-1})%
\right] }. \label{iis}
\end{eqnarray*}
It is evident that, for $\rho =1$, the increments have an
$\alpha$-stable distribution (see \cite{SAM}, p.113) and thus
$\mathcal{X}_{\alpha ,1 }$ reduces to the stable process.

\end{remark}

We now consider the one-dimensional distribution of the process
and we prove
that it can be written in terms of the Fox's $H$-function defined in (\ref%
{hh}).

\begin{theorem}\label{Th8}
The transition density $h_{\alpha }^{\rho }(x,t)$ of
$\mathcal{X}_{\alpha
,\rho }$, for any $\rho \in (0,1/\alpha ],$ is given by%
\begin{equation}
h_{\alpha }^{\rho }(x,t)=\frac{\alpha }{x\Gamma (\rho )}{\LARGE H}%
_{1,1}^{0,1}\left[ \left. \frac{x^{\alpha }}{t}\right\vert
\begin{array}{c}
(1-\rho ,1) \\
(0,\alpha )%
\end{array}%
\right] ,\qquad x>0,\text{ \ }t\geq 0.  \label{sta}
\end{equation}
\end{theorem}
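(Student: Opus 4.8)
The plan is to verify that the Fox $H$-function on the right-hand side of (\ref{sta}) is exactly the density whose Laplace transform is (\ref{pia}). By the one-dimensional case of Lemma \ref{LemCM}, the function $\eta\mapsto \Gamma(\rho;\eta^{\alpha}t)/\Gamma(\rho)$ is completely monotone and equals $1$ at $\eta=0$, so it is the Laplace transform of a genuine probability density $h_{\alpha}^{\rho}(\cdot,t)$ on $(0,+\infty)$; since the Laplace transform of a measure on the half-line is unique, it suffices to show that the candidate (\ref{sta}) reproduces (\ref{pia}). This is the same identification strategy already used for the spectral density $K_{\lambda}$ in Theorem \ref{Th1}.

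First I would write the $H$-function in (\ref{sta}) through its Mellin--Barnes representation (\ref{hh}). For $m=0$, $n=p=q=1$ with $(a_{1},A_{1})=(1-\rho,1)$ and $(b_{1},B_{1})=(0,\alpha)$ the kernel is $\Gamma(\rho-s)/\Gamma(1-\alpha s)$, so that
\begin{equation*}
h_{\alpha}^{\rho}(x,t)=\frac{\alpha}{x\,\Gamma(\rho)}\,\frac{1}{2\pi i}\int_{\mathcal{L}}\frac{\Gamma(\rho-s)}{\Gamma(1-\alpha s)}\Big(\frac{x^{\alpha}}{t}\Big)^{-s}ds ,
\end{equation*}
along a contour with $\mathcal{R}(s)<0$. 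Multiplying by $e^{-\eta x}$, integrating over $x$, and exchanging the order of integration, the inner integral is elementary, $\int_{0}^{+\infty}e^{-\eta x}x^{-\alpha s-1}dx=\Gamma(-\alpha s)\,\eta^{\alpha s}$, which converges precisely because $\mathcal{R}(s)<0$. Using $\Gamma(-\alpha s)/\Gamma(1-\alpha s)=-1/(\alpha s)$ the factor $\alpha$ cancels, leaving
\begin{equation*}
\int_{0}^{+\infty}e^{-\eta x}h_{\alpha}^{\rho}(x,t)\,dx=-\frac{1}{\Gamma(\rho)}\,\frac{1}{2\pi i}\int_{\mathcal{L}}\frac{\Gamma(\rho-s)}{s}\big(\eta^{\alpha}t\big)^{s}ds .
\end{equation*}

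The substitution $s\mapsto -s$ turns the last contour integral into $\frac{1}{\Gamma(\rho)}\frac{1}{2\pi i}\int \frac{\Gamma(\rho+s)}{s}(\eta^{\alpha}t)^{-s}ds$, which is exactly the Mellin--Barnes integral (\ref{MB}) for $\Gamma(\rho;\eta^{\alpha}t)/\Gamma(\rho)$ — the very identity already invoked at the end of the proof of Theorem \ref{Th1} — so the result coincides with (\ref{pia}) for $\lambda=\eta^{\alpha}$, completing the identification. Equivalently, one could bypass the bare Mellin--Barnes manipulation and instead apply the Laplace-transform formula (2.19) in \cite{MAT} directly to (\ref{sta}), followed by the reduction identities (1.58) and (1.60) of \cite{MAT}, exactly as in Theorem \ref{Th1}.

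The step needing the most care is the interchange of the Laplace integral with the Mellin--Barnes contour: one must check absolute convergence so that Fubini applies, and confirm that the $H$-function contour $\mathcal{L}$ can be taken with $\mathcal{R}(s)<0$, so that the poles of $\Gamma(-\alpha s)$ (at $s=0,1/\alpha,\dots$) and of $\Gamma(\rho-s)$ (at $s=\rho,\rho+1,\dots$) all lie to its right, the elementary $x$-integral converges, and the resulting line $\mathcal{R}(u)>0$ is the one in (\ref{MB}). As an independent cross-check, which I would include, the analogous computation of the Mellin transform in $x$ gives $\mathbb{E}\big(\mathcal{X}_{\alpha,\rho}(t)\big)^{\delta}=t^{\delta/\alpha}\,\Gamma(\rho-\delta/\alpha)/[\Gamma(\rho)\Gamma(1-\delta)]$ for $\delta<\alpha\rho$, agreeing with the moment formula announced in the Introduction and thereby confirming that (\ref{sta}) is indeed the transition density.
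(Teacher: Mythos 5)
Your proposal is correct, and its computational core is identical to the paper's proof, just run in the opposite direction. The paper starts from the one-dimensional Laplace transform (\ref{mlt}), writes it via the Mellin--Barnes representation (\ref{MB}), inverts the Laplace transform term-by-term under the contour (replacing $\eta ^{-\alpha s}$ by $x^{\alpha s-1}/\Gamma (\alpha s)$, i.e.\ using $\Gamma (\alpha s+1)=\alpha s\,\Gamma (\alpha s)$), and then recognizes the resulting contour integral as the $H$-function in (\ref{sta}) through the identities (1.58) and (1.60) of \cite{MAT}, with existence guaranteed by Theorem 1.1 of \cite{MAT} since $\alpha -1<0$. You instead take (\ref{sta}) as a candidate, compute its forward Laplace transform by Fubini (your step $\Gamma (-\alpha s)/\Gamma (1-\alpha s)=-1/(\alpha s)$ is the mirror image of the paper's step above), land on the Mellin--Barnes integral for the normalized incomplete gamma function, and close with uniqueness of Laplace transforms together with Lemma \ref{LemCM} for existence of the underlying probability law. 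What each buys: your verification direction is arguably tighter, since the forward transform needs only the absolute-convergence/Fubini check you correctly single out (it holds because the integrand decays like $e^{-(1-\alpha )\pi |\mathcal{I}(s)|/2}$ on vertical lines, $\alpha <1$), whereas the paper's term-by-term Laplace inversion under the contour is left formal; on the other hand, the paper's derivation constructs the density rather than verifying a guessed formula, so it also explains where (\ref{sta}) comes from, which a pure uniqueness argument cannot. One small point of care: as displayed in the paper, (\ref{MB}) carries a factor $\Gamma (\rho )$ in the denominator and is therefore really the representation of $\Gamma (\rho ;x)/\Gamma (\rho )$ rather than of $\Gamma (\rho ;x)$; your identification implicitly reads it that way, which is also how the paper itself uses it in the proofs of Theorem \ref{Th1} and Theorem \ref{Th8}, so no harm is done, but it is worth stating explicitly. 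Your closing moment cross-check reproduces (\ref{mom2}) with $c=t$ and is a legitimate consistency test, though not needed for the proof.
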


\begin{proof}
We invert the one-dimensional Laplace transform of the process $\mathcal{X}%
_{\alpha ,\rho },i.e.$
\begin{equation}
\mathbb{E}e^{-\eta \mathcal{X}_{\alpha ,\rho }(t)}=\frac{\Gamma
\left( \rho ;\eta ^{\alpha }t\right) }{\Gamma \left( \rho \right)
},\qquad \eta >0, \label{mlt}
\end{equation}%
by using (\ref{MB}), as follows%
\begin{eqnarray}
h_{\alpha }^{\rho }(x,t) &=&\frac{1}{2\pi i}\int_{\gamma -i\infty
}^{\gamma +i\infty }\frac{\Gamma (s+\rho )(tx^{-\alpha
})^{-s}}{x\Gamma (\rho )\Gamma (\alpha s)s}ds=\frac{\alpha }{2\pi
i}\int_{\gamma -i\infty }^{\gamma +i\infty }\frac{\Gamma (s+\rho
)(tx^{-\alpha })^{-s}}{x\Gamma (\rho )\Gamma
(\alpha s+1)}ds  \notag \\
&=&\frac{\alpha }{x\Gamma (\rho )}{\LARGE H}_{1,1}^{1,0}\left[
\left. tx^{-\alpha }\right\vert
\begin{array}{c}
(1,\alpha ) \\
(\rho ,1)%
\end{array}%
\right] =\frac{\alpha }{x\Gamma (\rho )}{\LARGE
H}_{1,1}^{0,1}\left[ \left. \frac{x^{\alpha }}{t}\right\vert
\begin{array}{c}
(1-\rho ,1) \\
(0,\alpha )%
\end{array}%
\right] , \label{den}
\end{eqnarray}%
which exists for any $x\neq 0$, since $\alpha -1<0$ (see Theorem 1.1 in \cite%
{MAT}).
\end{proof}

\begin{remark}
We can check that (\ref{sta}) is a proper density function, for
any $t$, by
applying the Mellin transform and recalling formula (2.8) in \cite{MAT}:%
\begin{eqnarray*}
\int_{0}^{+\infty }h_{\alpha }^{\rho }(x,t)dx &=&\frac{\alpha
}{\Gamma (\rho
)}\int_{0}^{+\infty }\frac{1}{x}{\LARGE H}_{1,1}^{0,1}\left[ \left. \frac{%
x^{\alpha }}{t}\right\vert
\begin{array}{c}
(1-\rho ,1) \\
(0,\alpha )%
\end{array}%
\right] dx \\
&=&\frac{1}{\Gamma (\rho )}\int_{0}^{+\infty }\frac{1}{y}{\LARGE H}%
_{1,1}^{0,1}\left[ \left. \frac{y}{t}\right\vert
\begin{array}{c}
(1-\rho ,1) \\
(0,\alpha )%
\end{array}%
\right] dy=\frac{1}{\Gamma (\rho )}\Gamma (\rho )=1.
\end{eqnarray*}
\end{remark}

The $H$-function distribution, which includes, among others, the
gamma, beta, Weibull, chi-square, exponential and the half-normal
distributions as particular cases, has been introduced by
\cite{CAR} and studied in many other papers (see, for example,
\cite{SPR}, \cite{VEL}).

For $\rho =1,$ formula (\ref{sta}) reduces to%
\begin{eqnarray*}
h_{\alpha }^{1}(x,t) &=&\alpha x^{-1}{\LARGE H}_{1,1}^{0,1}\left[
\left. \frac{x^{\alpha }}{t}\right\vert
\begin{array}{c}
(0,1) \\
(0,\alpha )%
\end{array}%
\right] =[\text{by (1.59) in \cite{MAT}}] \\
&=&x^{-1}{\LARGE H}_{1,1}^{0,1}\left[ \left. \frac{x}{t^{1/\alpha }}%
\right\vert
\begin{array}{c}
(0,\frac{1}{\alpha }) \\
(0,1)%
\end{array}%
\right] =[\text{by (1.60) in \cite{MAT}, for }\sigma =-1] \\
&=&\frac{1}{t^{1/\alpha }}{\LARGE H}_{1,1}^{0,1}\left[ \left. \frac{x}{%
t^{1/\alpha }}\right\vert
\begin{array}{c}
(-\frac{1}{\alpha },\frac{1}{\alpha }) \\
(-1,1)%
\end{array}%
\right] =\frac{1}{t^{1/\alpha }}\frac{1}{2\pi i}\int_{\mathcal{L}}\frac{%
\Gamma (1+\frac{1}{\alpha }-\frac{s}{\alpha })\left( \frac{x}{t^{1/\alpha }}%
\right) ^{-s}ds}{\Gamma (2-s)} \\
&=&\frac{1}{t^{1/\alpha }}\frac{1}{\alpha }{\LARGE
H}_{1,1}^{0,1}\left[ \left. \frac{x}{t^{1/\alpha }}\right\vert
\begin{array}{c}
(1-\frac{1}{\alpha },\frac{1}{\alpha }) \\
(0,1)%
\end{array}%
\right] =[\text{by (1.58) in \cite{MAT}}] \\
&=&\frac{1}{t^{1/\alpha }}\frac{1}{\alpha }{\LARGE
H}_{1,1}^{1,0}\left[ \left. \frac{t^{1/\alpha }}{x}\right\vert
\begin{array}{c}
(1,1) \\
(\frac{1}{\alpha },\frac{1}{\alpha })%
\end{array}%
\right] ,
\end{eqnarray*}%
which coincides, for $t=1,$ with the expression of the stable law
given in (2.28) of \cite{MAT}, in terms of $H$-functions.

On the other hand, for $\alpha =1$, formula (\ref{sta}) reduces to
(\ref{k}) and thus Theorem \ref{Th8} shows that the spectral
density $K(\cdot )$ of the fractional relaxation function $\varphi
_{1,\rho }(\cdot )$ coincides with the density of
$\mathcal{X}_{1,\rho }.$

We now study the dependence structure of the process
$\mathcal{X}_{\alpha ,\rho }$, by means of the auto-codifference
function: adapting the definition given in \cite{SAM} and
\cite{WYL3} to the Laplace transform (since our process is
non-negative valued), we can write, from (\ref{chi}),
\begin{eqnarray}
&&CD(\mathcal{X}_{\alpha ,\rho }(t),\mathcal{X}_{\alpha ,\rho
}(s))
\label{cd} \\
&:&=\ln \mathbb{E}e^{-\eta _{1}\mathcal{X}_{\alpha ,\rho }(t)-\eta _{2}%
\mathcal{X}_{\alpha ,\rho }(s)}-\ln \mathbb{E}e^{-\eta _{1}\mathcal{X}%
_{\alpha ,\rho }(t)}-\ln \mathbb{E}e^{-\eta
_{2}\mathcal{X}_{\alpha ,\rho
}(s)}  \nonumber \\
&=&\ln \frac{\Gamma (\rho ;(\eta _{1}+\eta _{2})^{\alpha }s)\Gamma
(\rho ;\eta _{2}^{\alpha }(t-s))}{\Gamma (\rho ;\eta _{1}^{\alpha
}s)\Gamma (\rho ;\eta _{2}^{\alpha }s)}.  \nonumber
\end{eqnarray}%

By considering the asymptotic behavior (\ref{ttt}) of the
incomplete gamma
function, we easily get%
\begin{equation}
CD(\mathcal{X}_{\alpha ,\rho }(t),\mathcal{X}_{\alpha ,\rho
}(s))\simeq \left( \eta _{1}^{\alpha }-\eta _{2}^{\alpha }\right)
t,  \label{ss}
\end{equation}
for $t\rightarrow +\infty $ and for fixed $s.$ The limiting expression (\ref%
{ss}) coincides with the auto-codifference of the $\alpha $-stable
subordinator. Therefore, we can consider the parameter $\rho $ as
a measure of the local deviation from the temporal dependence
structure displayed in the standard $\alpha $-stable case.

In order to obtain the Cauchy problem satisfied by the
one-dimensional density of $\mathcal{X}_{\alpha ,\rho }$ given in
(\ref{sta}), we distinguish the two cases: $0<\rho \leq 1$ and
$\rho \in (0,1/\alpha ].$

\subsection{Case $0<\protect\rho \leq 1$}

When we restrict to the case $\rho \leq 1$, we can obtain the
integro-differential equation satisfied by the density
(\ref{sta}), in terms of a convolution-type derivative with
space-dependent tail L\'{e}vy measure.

\begin{theorem}
\label{Thmden} Let $p_{\alpha }(\cdot )$ be the law of the $\alpha
$-stable r.v. $S_{\alpha }(1)$, for $\alpha \in (0,1),$ then, for
$\rho \in (0,1),$

\begin{enumerate}
\item
\begin{equation}
\overline{M}_{z}(dw)=\frac{\rho w^{-\frac{1}{\alpha }-\rho
-1}p_{\alpha }(z/w^{1/\alpha })}{\Gamma (1-\rho )}dw,\qquad w>0,
\label{mea}
\end{equation}%
is a L\'{e}vy measure, for any $z>0$.

\item If we define the convolution-type derivative (with
space-dependent
tail L\'{e}vy measure) $_{z}\mathcal{D}_{t}^{\rho }:=\int_{0}^{t}\frac{d}{dt}%
u(t-s)M_{z}(ds),$ where $M_{z}(s)=\int_{s}^{+\infty
}\overline{M}_{z}(dw)$, for any $z>0,$ then the transition density
$h_{\alpha }^{\rho }(x,t)$
satisfies the following integro-differential equation:%
\begin{equation}
\int_{0}^{x}\,_{z}\mathcal{D}_{t}^{\rho }h(x-z,t)dz=-\,^{C}\mathcal{D}%
_{x}^{\alpha \rho }h(x,t),  \label{pp}
\end{equation}%
with the following conditions:%
\begin{equation*}
\left\{
\begin{array}{l}
h(x,0)=\delta (x) \\
\lim_{x\downarrow 0^{+}}h(x,t)=0,%
\end{array}%
\right.
\end{equation*}%
for any $x\geq 0$ and $t\geq 0,$respectively.
\end{enumerate}
\end{theorem}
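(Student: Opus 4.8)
**

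The goal is to establish the integro-differential equation (\ref{pp}) governing $h_{\alpha}^{\rho}(x,t)$, the transition density of $\mathcal{X}_{\alpha,\rho}$, whose Laplace transform in the spatial variable is $\Gamma(\rho;\eta^{\alpha}t)/\Gamma(\rho)$ by (\ref{mlt}). The plan is to work throughout on the Laplace-transformed side (in the variable $\eta$, dual to $x$) and on the ordinary Laplace-transformed side (in $\theta$, dual to $t$), since both operators in (\ref{pp}) become multiplication-type operators there. Let me explain what I mean.

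Let me develop each piece.

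\textbf{Step 1: Part (1), the Lévy measure.} I would verify that $\overline{M}_{z}(dw)$ in (\ref{mea}) is a genuine Lévy measure by checking $\int_{0}^{+\infty}(w\wedge 1)\,\overline{M}_{z}(dw)<\infty$ for each fixed $z>0$. The integrand involves the stable density $p_{\alpha}(z/w^{1/\alpha})$; I would use the known tail and origin behaviour of $p_{\alpha}$ (the stable density decays like a power at infinity and is bounded, vanishing rapidly as its argument $\to 0^{+}$) to control both the small-$w$ and large-$w$ ends of the integral. The factor $w^{-1/\alpha-\rho-1}$ is the singular part near $w=0$, and I would show that the decay of $p_{\alpha}(z/w^{1/\alpha})$ as $w\downarrow 0$ (argument $\to\infty$) tames it, while near $w=\infty$ the $(w\wedge 1)=1$ bound together with the power decay of $p_{\alpha}$ at its small-argument regime gives integrability. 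This is essentially a scaling computation and I expect it to be routine once the asymptotics of $p_{\alpha}$ are invoked.

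\textbf{Step 2: Identify the associated Bernstein function.} The key bookkeeping is to recognize that the convolution-type derivative $_{z}\mathcal{D}_{t}^{\rho}$ associated with the tail $M_{z}(s)$ has, by (\ref{lapconv}), a Laplace symbol $g_{z}(\theta)$ obtained as the Bernstein function generated by $\overline{M}_{z}$. I would compute $g_{z}(\theta)=\int_{0}^{+\infty}(1-e^{-s\theta})\,\overline{M}_{z}(ds)$ and, via the scaling substitution that turns $p_{\alpha}(z/w^{1/\alpha})$ into the $\alpha$-stable Laplace exponent, show this symbol has an explicit closed form in terms of $z$ and $\theta$. The integration against the $z$-variable in the left side of (\ref{pp}), namely $\int_{0}^{x}{}_{z}\mathcal{D}_{t}^{\rho}h(x-z,t)\,dz$, is a convolution in $x$; applying the spatial Laplace transform $\eta$ turns it into the product of the $\eta$-transform of $h$ with the $\eta$-transform of $g_{z}(\theta)$ integrated appropriately.

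\textbf{Step 3: Match both sides via the double transform.} I would take the Laplace transform in $x$ (variable $\eta$) and in $t$ (variable $\theta$) of both sides of (\ref{pp}). On the right, the Caputo derivative $^{C}\mathcal{D}_{x}^{\alpha\rho}$ contributes, via the transform analogue of (\ref{cap}), a factor $\eta^{\alpha\rho}$ acting on $\widetilde{h}$ minus the appropriate boundary terms governed by $\lim_{x\downarrow 0^{+}}h(x,t)=0$. On the left, the $z$-convolution combined with $_{z}\mathcal{D}_{t}^{\rho}$ produces $g_{z}(\theta)$ times the transform of $h$, integrated over the structure coming from (\ref{mea}). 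The heart of the argument is then to verify the algebraic identity between the two resulting expressions, using the explicit double transform of $h$, which from (\ref{mlt}) and (\ref{templap}) is $\widehat{\widetilde{h}}(\eta,\theta)=\dfrac{(\lambda+\theta)^{\rho}-\lambda^{\rho}}{\theta(\lambda+\theta)^{\rho}}\Big|_{\lambda=\eta^{\alpha}}=\dfrac{(\eta^{\alpha}+\theta)^{\rho}-\eta^{\alpha\rho}}{\theta(\eta^{\alpha}+\theta)^{\rho}}$. I would reduce the claimed equation to the statement that these two symbols coincide. The main obstacle is bookkeeping: correctly tracking the boundary/initial terms generated by both the time-convolution derivative (which brings in $\nu(t)$-type contributions as seen in the earlier remark computation) and the Caputo space-derivative, and ensuring that the conditions $h(x,0)=\delta(x)$ and $\lim_{x\downarrow 0^{+}}h(x,t)=0$ make precisely the unwanted boundary terms vanish so that the two transforms agree. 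Once that symbol identity is checked, uniqueness of the Laplace transform yields (\ref{pp}).
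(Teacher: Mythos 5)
Your Step 1 matches the paper's proof of part (1): both reduce to the tail/origin asymptotics of $p_{\alpha}$ to check the integrability condition for $\overline{M}_{z}$. For part (2), your double-transform strategy is workable, and the symbol identity you propose to verify is in fact true: writing $A=(\eta^{\alpha}+\theta)^{\rho}$ and $B=\eta^{\alpha\rho}$, the Laplace transform in $t$ of the left side of (\ref{pp}) is, by (\ref{lapconv}), $\int_{0}^{x}g_{z}(\theta)\widetilde{h}(x-z,\theta)dz-g_{x}(\theta)/\theta$ (the second term coming from $h(x,0)=\delta(x)$), whose further transform in $x$ is $(A-B)\,\widehat{\widetilde{h}}(\eta,\theta)-(A-B)/\theta$; inserting $\widehat{\widetilde{h}}=(A-B)/(\theta A)$ from (\ref{temp3}) with $\lambda=\eta^{\alpha}$ gives $-B(A-B)/(\theta A)=-B\,\widehat{\widetilde{h}}$, which is the double transform of the right side once the Caputo boundary term vanishes. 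Note, though, that this is a different packaging from the paper's argument, which never transforms in $t$ inside this proof: the paper invokes Lemma \ref{Lemrel} with $\lambda=\eta^{\alpha}$, so that $\widehat{h}(\eta,t)=\varphi_{\eta^{\alpha},\rho}(t)$ satisfies the tempered relaxation equation in $t$, writes the kernel $\Gamma(-\rho;\eta^{\alpha}s)$ as $\int_{s}^{+\infty}e^{-\eta^{\alpha}w}w^{-\rho-1}dw$, and then inverts only in $\eta$, using $\varphi_{\eta^{\alpha},\rho}(t-s)e^{-\eta^{\alpha}w}=\mathcal{L}\{h_{\alpha}^{\rho}(\cdot,t-s)\ast p_{\alpha}(\cdot,w)\}$ together with the self-similarity $p_{\alpha}(z,w)=w^{-1/\alpha}p_{\alpha}(z/w^{1/\alpha})$; this directly produces the operator $\int_{0}^{x}\,_{z}\mathcal{D}_{t}^{\rho}h(x-z,t)dz$ and explains where the measure (\ref{mea}) comes from. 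Your verification route buys a purely algebraic check, at the cost of an extra appeal to uniqueness of Laplace inversion in two variables.

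There is, however, one genuine gap. The condition $\lim_{x\downarrow0^{+}}h(x,t)=0$ is not a hypothesis you may simply impose: it is part of the theorem's assertion about the specific density $h_{\alpha}^{\rho}$, and your argument needs it, since otherwise the $x$-Laplace transform of $^{C}\mathcal{D}_{x}^{\alpha\rho}h$ carries the extra term $-\eta^{\alpha\rho-1}h(0^{+},t)$ and your symbol identity fails. Your proposal nowhere proves this limit. The paper does prove it, by rewriting (\ref{sta}) via formula (1.60) of \cite{MAT} and applying the H-function asymptotics near zero (Theorem 1.3(ii) there), which yield a stretched-exponential bound of the form $h_{\alpha}^{\rho}(x,t)=O\left(x^{-\gamma}\right)\exp\left\{-(1-\alpha)\alpha^{\frac{1}{1-\alpha}}x^{-\frac{\alpha}{1-\alpha}}\right\}$ as $x\rightarrow0^{+}$; some substitute for this step is required. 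A second, minor, inaccuracy: in Step 2 you assert that $g_{z}(\theta)=\int_{0}^{+\infty}(1-e^{-\theta w})\overline{M}_{z}(dw)$ has ``an explicit closed form in terms of $z$ and $\theta$.'' It does not; what is explicit is its Laplace transform in $z$, namely $\int_{0}^{+\infty}e^{-\eta z}g_{z}(\theta)dz=(\eta^{\alpha}+\theta)^{\rho}-\eta^{\alpha\rho}$, which is where the scaling of $p_{\alpha}$ actually enters. Since your Step 3 passes to the $\eta$-variable anyway, this does not derail the proof, but the claim as written is wrong.
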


\begin{proof}
1. We apply the Property 1.2.15 on the tail's behavior of the
stable law, given in \cite{SAM}: $\lim_{x\rightarrow +\infty
}x^{\alpha }P(X>x)=C_{\alpha }\frac{1+\beta }{2}\sigma ^{\alpha
},$ for $C_{\alpha }$ given in (1.2.9), $\beta =1$ (since the
$\alpha $-stable considered here is totally skewed to the right)
and for $\sigma =(\cos (\pi \alpha
/2)w)^{1/\alpha }.$ Thus, by taking the first derivative, we get that $%
p_{\alpha }(x)=O\left( C_{\alpha }^{\prime }wx^{-\alpha -1}\right) $, for $%
x\rightarrow +\infty $ and a constant $C_{\alpha }^{\prime }.$
Thus we can
conclude that $p_{\alpha }(z/w^{1/\alpha })=O\left( Kw^{\frac{1}{\alpha }%
+2}\right) $, for $w\rightarrow 0,$ for a positive constant $K.$
We now
verify that the following finiteness condition is satisfied by $\overline{M}%
_{z}$:%
\begin{equation*}
\int_{0}^{+\infty }(z\wedge 1)\overline{M}_{z}(dz)=\int_{0}^{1}w^{-\frac{1}{%
\alpha }-\rho }p_{\alpha }(z/w^{1/\alpha })dw+\int_{1}^{+\infty }w^{-\frac{1%
}{\alpha }-\rho -1}p_{\alpha }(z/w^{1/\alpha })dw<\infty .
\end{equation*}%

The second integral is finite, since the stable law is finite in
the origin,
while for the first integral we can consider that the integrand function $%
w^{-\frac{1}{\alpha }-\rho }p_{\alpha }(z/w^{1/\alpha })=O\left(
Kw^{2-\rho }\right) $, for $w\rightarrow 0.$

2. We notice that the tail measure $M_{z}(s)$ is finite, for any
$s,z>0$, since $\int_{s}^{+\infty }\frac{\rho w^{-\frac{1}{\alpha
}-\rho -1}p_{\alpha
}(z/w^{1/\alpha })}{\Gamma (1-\rho )}dw<\int_{s}^{+\infty }\frac{\rho w^{-%
\frac{1}{\alpha }-\rho -1}}{\Gamma (1-\rho )}dw<\infty $\ \ Then,
by applying Lemma \ref{Lemrel}, with $\lambda =\eta ^{\alpha }$,
we have that
\begin{equation*}
\frac{\rho \eta ^{\alpha \rho }}{\Gamma (1-\rho )}\int_{0}^{t}\frac{d}{dt}%
\varphi _{\eta ^{\alpha },\rho }(t-s)\Gamma (-\rho ;\eta ^{\alpha
}s)ds=-\eta ^{\alpha \rho }\varphi _{\eta ^{\alpha },\rho }(t),
\end{equation*}%
with $\varphi _{\eta ^{\alpha },\rho }(0)=1$, is satisfied by
(\ref{mlt})$.$
Let now $\mu _{\eta }(s):=\frac{\rho \Gamma (-\rho ;\eta ^{\alpha }s)}{%
\Gamma (1-\rho )}=\eta ^{-\alpha \rho }\nu (s),$ then it is the
tail of a new L\'{e}vy measure denoted by $\overline{\mu }_{\eta
}(s)=\eta ^{-\alpha \rho }\overline{\nu }(s)$ (since it differs
only by a constant from the
original one). Then we get%
\begin{eqnarray*}
&&\frac{\rho }{\Gamma (1-\rho )}\int_{0}^{t}\frac{d}{dt}\varphi
_{\eta
^{\alpha },\rho }(t-s)\Gamma (-\rho ;\eta ^{\alpha }s)ds=\int_{0}^{t}\frac{d%
}{dt}\varphi _{\eta ^{\alpha },\rho }(t-s)\mu _{\eta }(s)ds \\
&=&\eta ^{-\alpha \rho }\int_{0}^{t}\frac{d}{dt}\varphi _{\eta
^{\alpha },\rho }(t-s)\nu _{\eta }(s)ds=-\varphi _{\eta ^{\alpha
},\rho }(t),
\end{eqnarray*}%
which moreover gives, by considering the expression of $\nu _{\eta
}(s),$
\begin{equation*}
\frac{\rho }{\Gamma (1-\rho )}\int_{0}^{t}\frac{d}{dt}\varphi
_{\eta ^{\alpha },\rho }(t-s)\int_{s}^{+\infty }e^{-\eta ^{\alpha
}w}w^{-\rho -1}dwds=-\eta ^{\alpha \rho }\varphi _{\eta ^{\alpha
},\rho }(t).
\end{equation*}%

We can now invert the Laplace transform (in the variable $\eta $)
in the
l.h.s. since $\varphi _{\eta ^{\alpha },\rho }(t)e^{-\eta ^{\alpha }w}=%
\mathcal{L}\left\{ h_{\alpha }^{\rho }(\cdot ,t)\right\}
\mathcal{L}\left\{ p_{\alpha }(\cdot ,w)\right\}
=\mathcal{L}\left\{ h_{\alpha }^{\rho }(\cdot ,t)\ast p_{\alpha
}(\cdot ,w)\right\} ,$ where $\ast $ denotes the
convolution operator:%
\begin{eqnarray*}
&&\frac{\rho }{\Gamma (1-\rho )}\int_{0}^{t}dz\int_{0}^{x}\frac{d}{dt}%
h_{\alpha }^{\rho }(x-z,t-s)\int_{s}^{+\infty }p_{\alpha
}(z,w)w^{-\rho
-1}dwds \\
&=&[\text{by the self-similarity property}] \\
&=&\frac{\rho }{\Gamma (1-\rho )}\int_{0}^{x}dz\int_{0}^{t}\frac{d}{dt}%
h_{\alpha }^{\rho }(x-z,t-s)\int_{s}^{+\infty }p_{\alpha
}(z/w^{1/\alpha })w^{-\frac{1}{\alpha }-\rho -1}dwds,
\end{eqnarray*}%
which coincides with the l.h.s. of (\ref{pp}). The r.h.s. follows
by taking into account the Laplace transform of the Caputo
derivative (\ref{cap}), of order $\alpha \rho <1$, since $\alpha
,\rho <1$, and the second initial
condition, which will be proved below to hold for $h_{\alpha }^{\rho }(x,t)$%
, together with the first one.

Finally, $h_{\alpha }^{\rho }(x,0)=\delta (x),$ by the assumption that $%
\mathcal{X}_{\alpha ,\rho }(0)=0,$ almost surely. On the other
hand, the limit for $x\rightarrow 0^{+}$ follows from the
following considerations:
the density (\ref{sta}) can be rewritten (by (1.60) in \cite{MAT}, with $%
\sigma =-1/\alpha $) as%
\begin{equation*}
h_{\alpha }^{\rho }(x,t)=\frac{\alpha }{t^{1/\alpha }\Gamma (\rho )}{\LARGE H%
}_{1,1}^{0,1}\left[ \left. \frac{x^{\alpha }}{t}\right\vert
\begin{array}{c}
(1-\rho -\frac{1}{\alpha },1) \\
(-1,\alpha )%
\end{array}%
\right] .
\end{equation*}%

The limit for $x\rightarrow 0^{+}$ is then obtained by considering
the
asymptotic behavior of the previous expression (see Theorem 1.3.(\textit{ii}%
) in \cite{MAT}) and since, in this case, $\mu =\alpha -1<0,$ $\delta =\rho +%
\frac{1}{\alpha }-2$ and since $A_{1}-B_{1}=1-\alpha >0$: thus we
get that
\begin{equation*}
h_{\alpha }^{\rho }(x,t)=O\left( x^{-\frac{\alpha \rho +1-\frac{3\alpha }{2}%
}{1-\alpha }}\right) \exp \left\{ -(1-\alpha )\alpha ^{\frac{1}{1-\alpha }%
}x^{-\frac{\alpha }{1-\alpha }}\right\} ,\qquad x\rightarrow
0^{+}.
\end{equation*}
\end{proof}

\begin{remark}
We recall that L\'{e}vy measures expressed in terms of $\alpha
$-stable densities have been already treated in \cite{BEG3}, in
connection with stable processes subordinated by independent
Poisson processes. Moreover, pseudo-differential operators defined
by means of time-dependent L\'{e}vy measures are used in
\cite{ORS}, \cite{BEG2} and \cite{BEG4}, in connection with
time-inhomogeneous subordinators.
\end{remark}

\subsection{Case $1<\protect\rho \leq 1/\protect\alpha $}

For $\rho $ greater than $1$, we derive the fractional
differential equation
satisfied by the transition density, at least in the special case where $%
\rho =2.$ To this aim we recall the definition of the logarithmic
differential operator introduced in \cite{BEG3}: let $\widehat{f}$
denote the Fourier transform, then $\mathcal{P}_{x,c}^{\alpha }$
is the operator
with the following symbol%
\begin{equation}
\widehat{(\mathcal{P}_{x,c}^{\alpha }f)}(\xi )=-\ln (1+c\psi
_{\theta }^{\alpha }(\xi ))\widehat{f}(\xi ),\quad c>0,\text{
}|\xi |<1/c, \label{sy2}
\end{equation}%
for any $f\in \mathcal{L}^{c}.$

\begin{theorem}
Let $\alpha \in (0,1)$ and let $h_{\alpha }^{2}(x,t)$ denote the
transition density of the process defined in (\ref{sta}), for
$\rho =2$, then
\begin{equation}
h_{\alpha }^{2}(x,t)=\frac{\alpha }{x}{\LARGE H}_{1,1}^{0,1}\left[
\left. \frac{x^{\alpha }}{t}\right\vert
\begin{array}{c}
(-1,1) \\
(0,\alpha )%
\end{array}%
\right]  \label{hh4}
\end{equation}%
satisfies the following equation%
\begin{equation}
\frac{\partial }{\partial t}u(x,t)=\mathcal{D}_{x,-\alpha }^{\alpha }u(x,t)-%
\frac{\partial }{\partial t}\mathcal{P}_{x,t}^{\alpha }u(x,t)+\mathcal{P}%
_{x,t}^{\alpha }\left[ \frac{\partial }{\partial t}u(x,t)\right] ,
\label{eq}
\end{equation}%
with initial condition $u(x,0)=\delta (x)$.
\end{theorem}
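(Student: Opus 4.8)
The plan is to verify equation (\ref{eq}) at the level of transforms, combining the Fourier transform in the space variable $x$ with the Laplace transform in the time variable $t$, since both sides of (\ref{eq}) are most naturally diagonalized that way. First I would record the joint transform of $h_{\alpha}^{2}(x,t)$. Starting from the one-dimensional Laplace transform (\ref{mlt}), specialized to $\rho=2$, we have $\int_0^{+\infty} e^{-\eta x} h_{\alpha}^{2}(x,t)\,dx = \Gamma(2;\eta^{\alpha}t)/\Gamma(2)$. Using the explicit form $\Gamma(2;w)=(1+w)e^{-w}$, this equals $(1+\eta^{\alpha}t)e^{-\eta^{\alpha}t}$. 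Replacing the Laplace variable $\eta$ by the Riesz--Feller symbol (with $\xi>0$, $\theta=-\alpha$, as announced after (\ref{due})), the space-transform of $h_{\alpha}^{2}(\cdot,t)$ is $\widehat{h}(\xi,t)=\bigl(1+t\,\psi_{-\alpha}^{\alpha}(\xi)\bigr)\exp\{-t\,\psi_{-\alpha}^{\alpha}(\xi)\}$, where $\psi_{-\alpha}^{\alpha}(\xi)=|\xi|^{\alpha}e^{-i\,\mathrm{sign}(\xi)\alpha\pi/2}$ from (\ref{due}).

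Next I would transcribe each operator on the right-hand side of (\ref{eq}) into a multiplier acting on $\widehat{h}(\xi,t)$. By definition (\ref{uno}), $\mathcal{D}_{x,-\alpha}^{\alpha}$ multiplies by $-\psi_{-\alpha}^{\alpha}(\xi)$; and by definition (\ref{sy2}), the logarithmic operator $\mathcal{P}_{x,t}^{\alpha}$ multiplies by $-\ln\bigl(1+t\,\psi_{-\alpha}^{\alpha}(\xi)\bigr)$, where the crucial point is that here the parameter $c$ equals the current time $t$, so the symbol of $\mathcal{P}_{x,t}^{\alpha}$ depends on $t$. Writing $\Psi\doteq\psi_{-\alpha}^{\alpha}(\xi)$ for brevity, the transformed right-hand side of (\ref{eq}) becomes
\begin{equation*}
-\Psi\,\widehat{h}(\xi,t)-\frac{\partial}{\partial t}\Bigl[-\ln(1+t\Psi)\,\widehat{h}(\xi,t)\Bigr]-\ln(1+t\Psi)\,\frac{\partial}{\partial t}\widehat{h}(\xi,t).
\end{equation*}

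The decisive observation is that the two logarithmic terms do \emph{not} simply cancel, precisely because the symbol of $\mathcal{P}^{\alpha}_{x,t}$ carries the time-dependence $c=t$. Expanding the middle term by the product rule,
\begin{equation*}
-\frac{\partial}{\partial t}\Bigl[-\ln(1+t\Psi)\,\widehat{h}\Bigr]=\frac{\Psi}{1+t\Psi}\,\widehat{h}+\ln(1+t\Psi)\,\frac{\partial}{\partial t}\widehat{h},
\end{equation*}
so the term $\ln(1+t\Psi)\,\partial_t\widehat{h}$ cancels against the last summand, leaving the transformed right-hand side equal to $-\Psi\,\widehat{h}+\dfrac{\Psi}{1+t\Psi}\,\widehat{h}$. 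I would then compute $\partial_t\widehat{h}$ directly from the explicit formula $\widehat{h}=(1+t\Psi)e^{-t\Psi}$, obtaining $\partial_t\widehat{h}=\Psi e^{-t\Psi}-\Psi(1+t\Psi)e^{-t\Psi}=-t\Psi^2 e^{-t\Psi}$. Matching this against $\bigl(-\Psi+\Psi/(1+t\Psi)\bigr)(1+t\Psi)e^{-t\Psi}=\bigl(-\Psi(1+t\Psi)+\Psi\bigr)e^{-t\Psi}=-t\Psi^2 e^{-t\Psi}$ confirms the identity, so (\ref{eq}) holds at the transform level, and hence as an equality of functions by uniqueness of the inverse transform.

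The main obstacle I anticipate is not the algebra above but the justification of the formal manipulations: one must check that $\widehat{h}(\xi,t)$ lies in the function class $\mathcal{L}^c$ on which (\ref{sy2}) is defined, and in particular that the logarithmic symbol is well-defined along the admissible contour (the restriction $|\xi|<1/c=1/t$ in (\ref{sy2}) must be respected, and the branch of the complex logarithm of $1+t\Psi$ must be fixed consistently for $\xi>0$, $\theta=-\alpha$). I would also confirm that differentiation under the transform in $t$ is legitimate and that the initial condition $u(x,0)=\delta(x)$ is consistent, which follows since $\widehat{h}(\xi,0)=1$. The first step — deriving the explicit symbol $\widehat{h}(\xi,t)=(1+t\Psi)e^{-t\Psi}$ from (\ref{mlt}) via the closed form $\Gamma(2;w)=(1+w)e^{-w}$ — is what makes the whole verification tractable, since it replaces the $H$-function in (\ref{hh4}) by an elementary expression.
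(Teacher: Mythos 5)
Your proof is correct, and it follows the same overall strategy as the paper's proof (pass to Fourier space, where the two logarithmic terms collapse to the multiplier $-\Psi+\Psi/(1+t\Psi)$, then check the explicit symbol of the density against the resulting ODE in $t$); the verification half of your argument, including the cancellation of the $\ln(1+t\Psi)\,\partial_t\widehat{h}$ terms and the computation $\partial_t\widehat{h}=-t\Psi^2e^{-t\Psi}$, is essentially identical to the paper's. Where you genuinely diverge is in how the Fourier transform $\widehat{h}(\xi,t)=(1+t\Psi)e^{-t\Psi}$ is obtained. The paper derives it directly from the $H$-function representation (\ref{hh4}): it splits the Fourier integral into sine and cosine transforms of the $H$-function (formulas (2.49) and (2.59) of \cite{MAT}), then applies the duplication and reflection formulas for the Gamma function to reduce the result to a single Mellin--Barnes integral $\frac{1}{2\pi i}\int_{\mathcal{L}}(e^{-i\pi\alpha/2}t\xi^{\alpha})^{-s}\Gamma(\rho+s)s^{-1}ds$, valid for general $\rho$, and only then specializes to $\rho=2$ via $\Gamma(2+s)/s=(1+s)\Gamma(s)$ and formula (1.38) of \cite{MAT}. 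You instead bypass the $H$-function entirely: you use the elementary closed form $\Gamma(2;w)=(1+w)e^{-w}$ of the Laplace transform (\ref{mlt}) and continue analytically from the positive real axis $\eta>0$ to $\eta=-i\xi$, which produces exactly the Riesz--Feller symbol $\psi^{\alpha}_{-\alpha}(\xi)=\xi^{\alpha}e^{-i\alpha\pi/2}$ for $\xi>0$. Your route is considerably shorter and more transparent, at the price of needing the (brief, standard) justification you flag yourself: the Laplace transform of a probability density on $[0,\infty)$ and the entire function $(1+tz^{\alpha})e^{-tz^{\alpha}}$ (principal branch) are both analytic in the open right half-plane and agree on $(0,\infty)$, hence agree up to the boundary, giving the characteristic function as the boundary value. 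To make this airtight you should state that continuation argument explicitly rather than just "replacing $\eta$ by the symbol". The paper's heavier computation buys something in exchange: the general-$\rho$ Mellin--Barnes formula for $\widehat{h}$, which does not rely on $\rho$ being an integer, whereas your elementary closed form for the incomplete Gamma function is special to $\rho\in\mathbb{N}$ (for general $\rho$ your method would still work but would again require handling $\Gamma(\rho;\cdot)$ as a non-elementary function).
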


\begin{proof}
By taking the Fourier transform of (\ref{eq}) and considering (\ref{uno})-(%
\ref{due}) (for $\theta =-\alpha )$ and (\ref{sy2}), we get%
\begin{eqnarray}
&&\frac{\partial }{\partial t}\widehat{u}(\xi ,t)  \label{eq2} \\
&=&\mathcal{-}\psi _{-\alpha }^{\alpha }(\xi )\widehat{u}(\xi ,t)+\frac{%
\partial }{\partial t}\left[ \ln (1+t\psi _{-\alpha }^{\alpha }(\xi ))%
\widehat{u}(\xi ,t)\right] -\ln (1+t\psi _{-\alpha }^{\alpha }(\xi ))\frac{%
\partial }{\partial t}\widehat{u}(\xi ,t)  \nonumber \\
&=&\mathcal{-}\psi _{-\alpha }^{\alpha }(\xi )\widehat{u}(\xi
,t)+\frac{\psi
_{-\alpha }^{\alpha }(\xi )}{1+t\psi _{-\alpha }^{\alpha }(\xi )}\widehat{u}%
(\xi ,t)+\ln (1+t\psi _{-\alpha }^{\alpha }(\xi ))\frac{\partial }{\partial t%
}\widehat{u}(\xi ,t)  \nonumber \\
&&-\ln (1+t\psi _{-\alpha }^{\alpha }(\xi ))\frac{\partial }{\partial t}%
\widehat{u}(\xi ,t)  \nonumber \\
&=&\mathcal{-}\psi _{-\alpha }^{\alpha }(\xi )\widehat{u}(\xi
,t)+\frac{\psi
_{-\alpha }^{\alpha }(\xi )}{1+t\psi _{-\alpha }^{\alpha }(\xi )}\widehat{u}%
(\xi ,t)  \nonumber
\end{eqnarray}%

On the other hand, we evaluate the Fourier transform of
(\ref{sta}) as follows (by considering formulae (2.49) and (2.59)
of the sine and cosine transform of the $H$-function):
\begin{eqnarray}
&&\int_{-\infty }^{+\infty }e^{i\xi x}\frac{\alpha }{x}{\LARGE H}_{1,1}^{0,1}%
\left[ \left. \frac{x^{\alpha }}{t}\right\vert
\begin{array}{c}
(1-\rho ,1) \\
(0,\alpha )%
\end{array}%
\right] dx  \label{fh} \\
&=&\alpha \int_{-\infty }^{+\infty }\cos (\xi x)\frac{1}{x}{\LARGE H}%
_{1,1}^{0,1}\left[ \left. \frac{x^{\alpha }}{t}\right\vert
\begin{array}{c}
(1-\rho ,1) \\
(0,\alpha )%
\end{array}%
\right] dx \nonumber \\
&&+i\alpha \int_{-\infty }^{+\infty }\sin (\xi x)\frac{1}{x}{\LARGE H}%
_{1,1}^{0,1}\left[ \left. \frac{x^{\alpha }}{t}\right\vert
\begin{array}{c}
(1-\rho ,1) \\
(0,\alpha )%
\end{array}%
\right] dx  \nonumber \\
&=&\frac{\alpha \sqrt{\pi }}{2}{\LARGE H}_{3,1}^{0,2}\left[ \left. \frac{%
2^{\alpha }}{t\xi ^{\alpha }}\right\vert
\begin{array}{ccc}
(1,\frac{\alpha }{2}) & (1-\rho ,1) & (\frac{1}{2},\frac{\alpha }{2}) \\
(0,\alpha ) & \; & \;%
\end{array}%
\right] \nonumber \\
&&+\frac{i\alpha \sqrt{\pi }}{2}{\LARGE H}_{3,1}^{0,2}\left[ \left. \frac{%
2^{\alpha }}{t\xi ^{\alpha }}\right\vert
\begin{array}{ccc}
(\frac{1}{2},\frac{\alpha }{2}) & (1-\rho ,1) & (1,\frac{\alpha }{2}) \\
(0,\alpha ) & \; & \;%
\end{array}%
\right]   \nonumber \\
&=&\frac{\alpha \sqrt{\pi }}{2}\frac{1}{2\pi
i}\int_{\mathcal{L}}\left(
\frac{2^{\alpha }}{t\xi ^{\alpha }}\right) ^{-s}\frac{\Gamma (-\frac{\alpha s%
}{2})\Gamma (\rho -s)}{\Gamma (1-\alpha s)\Gamma (\frac{1}{2}+\frac{\alpha s%
}{2})}ds  \nonumber \\
&&+\frac{i \alpha \sqrt{\pi }}{2}\frac{1}{2\pi
i}\int_{\mathcal{L}}\left(
\frac{2^{\alpha }}{t\xi ^{\alpha }}\right) ^{-s}\frac{\Gamma (\frac{1}{2}-%
\frac{\alpha s}{2})\Gamma (\rho -s)}{\Gamma (1-\alpha s)\Gamma (1+\frac{%
\alpha s}{2})}ds  \nonumber \\
&=&[\text{by the duplication formula}]  \nonumber \\
&=&\frac{\alpha }{4}\frac{1}{2\pi i}\int_{\mathcal{L}}\left(
\frac{1}{t\xi ^{\alpha }}\right) ^{-s}\frac{\Gamma (-\frac{\alpha
s}{2})\Gamma (\rho -s)\Gamma (\frac{\alpha s}{2})}{\Gamma
(1-\alpha s)\Gamma (\alpha s)}ds
\nonumber \\
&&+i\alpha \pi \frac{1}{2\pi i}\int_{\mathcal{L}}\left(
\frac{1}{t\xi
^{\alpha }}\right) ^{-s}\frac{\Gamma (-\alpha s)\Gamma (\rho -s)}{\Gamma (-%
\frac{\alpha s}{2})\Gamma (1-\alpha s)\Gamma (1+\frac{\alpha
s}{2})}ds
\nonumber \\
&=&[\text{by the reflection formula}]  \nonumber \\
&=&-\frac{1}{2}\frac{1}{2\pi i}\int_{\mathcal{L}}\left(
\frac{1}{t\xi ^{\alpha }}\right) ^{-s}\frac{\sin (\pi \alpha
s)\Gamma (\rho -s)}{\sin (\pi \alpha s/2)s}ds \nonumber \\
&&+i\frac{1}{2\pi i}\int_{\mathcal{L}}\left( \frac{1}{t\xi
^{\alpha }}\right) ^{-s}\frac{\sin (\pi \alpha s/2)\Gamma (\rho
-s)}{s}ds
\nonumber \\
&=&\frac{1}{2\pi i}\int_{\mathcal{L}}\left( t\xi ^{\alpha }\right) ^{-s}%
\frac{\cos (\pi \alpha s/2)\Gamma (\rho +s)}{s}ds \nonumber \\
&&+\frac{i}{2\pi i}\int_{%
\mathcal{L}}\left( t\xi ^{\alpha }\right) ^{-s}\frac{\sin (\pi
\alpha
s/2)\Gamma (\rho +s)}{s}ds  \nonumber \\
&=&\frac{1}{2\pi i}\int_{\mathcal{L}}\left( e^{-\frac{i\pi \alpha
}{2}}t\xi ^{\alpha }\right) ^{-s}\frac{\Gamma (\rho +s)}{s}ds.
\nonumber
\end{eqnarray}%

In this special case $\rho =2$, we can write (\ref{fh}) as follows%
\begin{eqnarray}
&&\int_{-\infty }^{+\infty }e^{i\xi x}\frac{\alpha }{x}{\LARGE H}_{1,1}^{0,1}%
\left[ \left. \frac{x^{\alpha }}{t}\right\vert
\begin{array}{c}
(1-\rho ,1) \\
(0,\alpha )%
\end{array}%
\right] dx=\frac{1}{2\pi i}\int_{\mathcal{L}}\left( e^{-\frac{i\pi \alpha }{2%
}}t\xi ^{\alpha }\right) ^{-s}(1+s)\Gamma (s)ds  \notag \\
&=&\frac{1}{2\pi i}\int_{\mathcal{L}}\left( e^{-\frac{i\pi \alpha
}{2}}t\xi ^{\alpha }\right) ^{-s}\Gamma (s)ds+\frac{1}{2\pi
i}\int_{\mathcal{L}}\left( e^{-\frac{i\pi \alpha }{2}}t\xi
^{\alpha }\right) ^{-s}\Gamma (s+1)ds  \label{fh2}
\\
&=&[\text{by formula (1.38) in \cite{MAT}}]  \notag \\
&=&e^{-t\xi ^{\alpha }e^{-\frac{i\pi \alpha }{2}}}\left( 1+t\xi
^{\alpha }e^{-\frac{i\pi \alpha }{2}}\right) =e^{-\psi _{-\alpha
}^{\alpha }(\xi )t}(1+t\psi _{-\alpha }^{\alpha }(\xi )).  \notag
\end{eqnarray}%

It is easy to check that (\ref{fh2}) satisfies (\ref{eq2}) as well
as the initial condition.
\end{proof}


\section{Properties of the generalized stable law}\label{sec:4}

\setcounter{section}{4}
\setcounter{equation}{0}\setcounter{theorem}{0}

Let now consider a fix time argument and define
$\mathcal{X}_{\alpha ,\rho }$ as the r.v. with Laplace transform
\begin{equation}
\mathbb{E}e^{-\eta \mathcal{X}_{\alpha ,\rho }}=\frac{\int_{c\eta
^{\alpha }}^{+\infty }e^{-w}w^{\rho -1}dw}{\Gamma \left( \rho
\right) }=\frac{\Gamma \left( \rho ;c\eta ^{\alpha }\right)
}{\Gamma \left( \rho \right) },\qquad \eta >0,  \label{gst}
\end{equation}%
for $\alpha \in (0,1]$, $\rho \leq 1/\alpha $ and scale parameter
$\sigma =c^{1/\alpha }\in \mathbb{R}^{+}$. Then an interesting
particular case can be obtained for $\rho =n\in \mathbb{N}:$ by
successive integrating by parts\
of (\ref{gst}), it can be checked that%
\begin{equation}
\mathbb{E}e^{-\eta \mathcal{X}_{\alpha ,\rho }}=\frac{\int_{c\eta
^{\alpha }}^{+\infty }e^{-w}w^{n-1}dw}{\Gamma \left( n\right)
}=e^{-c\eta ^{\alpha }}\sum_{j=0}^{n-1}\frac{(c\eta ^{\alpha
})^{j}}{j!},  \label{lapp}
\end{equation}%
thus coinciding with the cumulative distribution function of a Poisson r.v. $%
Z_{c\eta ^{\alpha }}$ with parameter $c\eta ^{\alpha }$ (i.e.
$P(Z_{c\eta
^{\alpha }}<n)$). By inverting (\ref{lapp}), the density of $\mathcal{X}%
_{\alpha ,\rho }$ can be written, for $\rho =n>1,$ as follows%
\begin{equation}
h_{\alpha }^{n}(x)=\sum_{j=0}^{n-1}\frac{c^{j}}{j!}\frac{1}{\Gamma
(-\alpha j)}\int_{0}^{x}\frac{p_{\alpha }(z)}{(x-z)^{\alpha
j+1}}dz,  \label{hn}
\end{equation}%
i.e. as a finite sum of convolutions of the standard $\alpha
$-stable density $p_{\alpha }(\cdot ).$

In the case $\rho =2,$ we can also rewrite (\ref{den}) as
\begin{eqnarray}
h_{\alpha }^{2}(x) &=&\frac{\alpha }{x}{\LARGE
H}_{1,1}^{0,1}\left[ \left. \frac{x^{\alpha }}{c}\right\vert
\begin{array}{c}
(-1,1) \\
(0,\alpha )%
\end{array}%
\right] =\frac{\alpha }{x}\frac{1}{2\pi
i}\int_{\mathcal{L}}\frac{\Gamma (2-s)\left( \frac{x^{\alpha
}}{c}\right) ^{-s}}{\Gamma (1-\alpha s)}ds
\notag \\
&=&\frac{\alpha }{x}\frac{1}{2\pi i}\int_{\mathcal{L}}\frac{\Gamma
(z)\left(
\frac{x^{\alpha }}{c}\right) ^{z-2}}{\Gamma (1+\alpha z-2\alpha )}dz=\frac{%
\alpha c^{2}}{x^{2\alpha +1}}W_{-\alpha ,1-2\alpha
}(-\frac{c}{x^{\alpha }}). \notag
\end{eqnarray}%

Moreover, when $\rho =2$ and $\alpha =1/2,$ it reduces to%
\begin{eqnarray*}
h_{1/2}^{2}(x) &=&\frac{c^{2}}{2x^{2}}\frac{1}{2\pi i}\int_{\mathcal{L}}%
\frac{\Gamma (s)\left( \frac{c}{\sqrt{x}}\right) ^{-s}}{\Gamma (s/2)}ds \\
&=&\text{by the duplication formula of the gamma function}] \\
&=&\frac{c^{2}}{4x^{2}\sqrt{\pi }}\frac{1}{2\pi i}\int_{\mathcal{L}}\Gamma (%
\frac{s}{2}+\frac{1}{2})\left( \frac{c}{2\sqrt{x}}\right) ^{-s}ds \\
&=&\frac{c^{2}}{4x^{2}\sqrt{\pi }}{\LARGE H}_{0,1}^{1,0}\left[ \left. \frac{c%
}{2\sqrt{x}}\right\vert
\begin{array}{c}
\, \\
(\frac{1}{2},\frac{1}{2})%
\end{array}%
\right] =[\text{by (1.125) in \cite{MAT}}] \\
&=&\frac{c^{3}}{4x\sqrt{\pi x^{3}}}e^{-\frac{c^{2}}{4x}},
\end{eqnarray*}%
which is linked to a L\'{e}vy distribution $Y_{a}$ with scale parameter $%
a=c^{2}/2$ by the following relationship:%
\begin{equation}
h_{1/2}^{2}(x)=\frac{a}{x}P\left( Y_{a}\in dx\right) .
\label{levy2}
\end{equation}%
which is linked to a L\'{e}vy distribution $Y_{a}$ with scale parameter $%
a=c^{2}/2$ by the following relationship:%
\begin{equation}
h_{1/2}^{2}(x)=\frac{a}{x}P\left( Y_{a}\in dx\right) .
\label{levy2}
\end{equation}%

The Laplace transform can be checked to coincide with (\ref{gst}),
indeed
\begin{equation*}
\int_{c\sqrt{\eta }}^{+\infty }e^{-w}wdw=e^{-c\sqrt{\eta
}}(1+c\sqrt{\eta })
\end{equation*}%
and
\begin{equation*}
\frac{d}{d\eta }\int_{c\sqrt{\eta }}^{+\infty }e^{-w}wdw=\frac{d}{d\eta }%
e^{-c\sqrt{\eta }}(1+c\sqrt{\eta
})=-\frac{c^{2}}{2}e^{-c\sqrt{\eta }}
\end{equation*}%

On the other hand,%
\begin{equation*}
\frac{d}{d\eta }\int_{0}^{+\infty }e^{-\eta x}\frac{a}{x}P\left(
Y_{a}\in
dx\right) =-a\int_{0}^{+\infty }e^{-\eta x}P\left( Y_{a}\in dx\right) =-ae^{-%
\sqrt{2a\eta }}.
\end{equation*}%

Moreover, from (\ref{levy2}) it is evident that the first moment
is finite
and reads%
\begin{equation*}
\mathbb{E}\mathcal{X}_{\frac{1}{2},2}=\frac{c^{2}}{2}.
\end{equation*}%

The tail probabilities of the r.v. $\mathcal{X}_{\alpha ,\rho }$
can be obtained, by means of the Tauberian theorem (see
e.g.\cite{FEL}, Theorem XIII-5-4, p.446), as follows:

\begin{align*}
\int_{0}^{+\infty }e^{-ux}P(\mathcal{X}_{\alpha ,\rho }>x)dx& =\frac{1-%
\mathbb{E}e^{-u\mathcal{X}_{\alpha ,\rho }}}{u} \\
& =\frac{\int_{0}^{cu^{\alpha }}e^{-w}w^{\rho-1}dw}{ u\Gamma
\left(\rho
\right)}=[\text{for }u\rightarrow 0] \\
& \simeq \frac{u^{\alpha \rho -1}}{\Gamma \left( \rho +1\right) },
\end{align*}%
by considering the well-known asymptotics of the (lower)
incomplete gamma
function. Under the condition $\rho <1/\alpha $ we get, from (5.17) in \cite%
{FEL}, for a constant $L(\cdot )$,%
\begin{equation}
\lim_{x\rightarrow +\infty }x^{\alpha \rho }P(\mathcal{X}_{\alpha ,\rho }>x)=%
\frac{1}{\Gamma (\rho +1)\Gamma (1-\alpha \rho )}.  \label{oo}
\end{equation}%

For $\rho =1$ ($\alpha $-stable case), formula (\ref{oo}) reduces
with the well-known result (see formula (1.2.10) in \cite{SAM}).

By taking into account (\ref{oo}) and the fact that
$\mathbb{E}X^{\delta }=\int_{0}^{+\infty }P(X^{\delta }>x)dx,$ for
any positive r.v., we can give the following condition for the
finiteness of the $\delta $-th moments:

\begin{equation*}
\mathbb{E}\left( \mathcal{X}_{\alpha ,\rho }\right) ^{\delta }<+\infty \text{%
, \qquad iff\qquad\ }0<\delta <\alpha \rho .
\end{equation*}%

Therefore, in the case where $\rho <1,$ the r.v.
$\mathcal{X}_{\alpha ,\rho } $ has finite moments only of
fractional order less than $1,$ as happens for the stable law of
order $\alpha \in (0,1).$ For any $\rho $, we can evaluate the
moment of order $\delta \in (0,\alpha \rho ),$ for $\delta \neq
n\in \mathbb{N}$, by applying the Mellin transform formula for the $H$%
-function given in (2.8) of \cite{MAT}:%
\begin{eqnarray}
\mathbb{E}\left( \mathcal{X}_{\alpha ,\rho }\right) ^{\delta } &=&\frac{%
\alpha }{\Gamma (\rho )}\int_{0}^{+\infty }x^{\delta -1}{\LARGE H}%
_{1,1}^{0,1}\left[ \left. \frac{x^{\alpha }}{c}\right\vert
\begin{array}{c}
(1-\rho ,1) \\
(0,\alpha )%
\end{array}%
\right] dx  \label{mom2} \\
&=&\frac{1}{\Gamma (\rho )}\int_{0}^{+\infty }z^{\frac{\delta }{\alpha }-1}%
{\LARGE H}_{1,1}^{0,1}\left[ \left. \frac{z}{c}\right\vert
\begin{array}{c}
(1-\rho ,1) \\
(0,\alpha )%
\end{array}%
\right] dz  \notag \\
&=&\frac{c^{\frac{\delta }{\alpha }}}{\Gamma (\rho )}\frac{\Gamma (\rho -%
\frac{\delta }{\alpha })}{\Gamma (1-\delta )}.  \notag
\end{eqnarray}%
Formula (\ref{mom2}) reduces, for $\rho =1,$ to the well-known
$\delta $-th order moment of the stable law (see \cite{WOL}).


\section*{Acknowledgements}

 The authors are grateful to the referees and the Editor
 for many useful comments on an earlier version and to
 Claudio Macci for insightful discussions and suggestions.





 \bigskip \smallskip

 \it

 \noindent
$^1$ Sapienza University of Rome \\
p.le A.Moro 5 \\
00185 Rome, ITALY  \\[4pt]
  e-mail: luisa.beghin@uniroma1.it (Corr. author)

$^2$  Faculty of Economic Sciences, University of Warsaw \\
Dluga 44/50 \\
00-241 Warsaw, POLAND \\[4pt]
  e-mail: jgajda@wne.uw.edu.pl


\begin{thebibliography}{99} 
 \normalsize 

\bibitem{BEG} L. Beghin, On fractional tempered stable processes and their
governing differential equations. \emph{Journal of Computational
Physics}, \textbf{293} (2015), 29--39

\bibitem{BEG3} L. Beghin, Fractional diffusion-type equations with
exponential and logarithmic differential operators. \emph{Stoch. Proc. Appl.,%
} \textbf{128} (2018), 2427--2447.

\bibitem{BEG2} L. Beghin, C. Ricciuti, Time-inhomogeneous fractional Poisson
processes defined by the multistable subordinator. \emph{Stoch. Anal. Appl.}%
, \textbf{37} No 2 (2019), 171-188.

\bibitem{BEG4} L. Beghin, C. Ricciuti, Pseudo-differential operators and
related additive geometric stable processes, \emph{Markov
Processes and Related Fields}, \textbf{25} (2019), 415-444.

\bibitem{BOC} S. Bochner, \emph{Harmonic Analysis and the Theory of
Probability}. University of California Press, California Monogr.
Math. Sci., Berkeley (1955).

\bibitem{CAR} B.D. Carter, M.D. Springer, The distribution of products,
quotients and powers of independent H-function variates.
\emph{SIAM J. Appl. Math.}, \textbf{33}, No 4 (1977), 542-558.

\bibitem{DIM} I. Dimovsky, V. Kiryakova, The Obrechkoff integral
transform: properties and relation to a generalized fractional
calculus. \emph{Numerical Functional Analysis and Optimization}
\textbf{21}, No 1-2 (2000), 121-144. DOI:
10.1080/01630560008816944.

\bibitem{FEL} W. Feller,\emph{\ An Introduction Probability Theory and its
Applications}. vol.2 (2nd ed.), Wiley, New York (1971).

\bibitem{GAJ} J. Gajda, A. Wylomanska, Time-changed Ornstein--Uhlenbeck
process. \emph{J. Phys. A: Math. Theor}., \textbf{48} (2015) 1-19.

\bibitem{GAJ2} J. Gajda, A. Kumar, A. Wylomanska, Stable L\'{e}vy process
delayed by tempered stable subordinator. \emph{Statistics and
Probability Letters} \textbf{145} (2019) 284--292.

\bibitem{GAR} R. Garra, A. Giusti, F. Mainardi, G. Pagnini, Fractional
relaxation with time-varying coefficient. \emph{Fractional
Calculus and Applied Analysis, }\textbf{17} No 2 (2014), 424--439,
 DOI: 10.2478/s13540-014-0178-0.

\bibitem{GOR} R. Gorenflo, A.A. Kilbas, F. Mainardi, S.V. Rogosin, \emph{%
Mittag-Leffler Functions, Related Topics and Applications.
}Springer-Verlag, Berlin Heidelberg (2014).

\bibitem{JAM} G.J. Jameson, The incomplete gamma functions. \emph{The Math.
Gazette}, \textbf{100} (548), (2016) , 298-306.

\bibitem{KAR} D.B. Karp, J.L. Lopez, Representations of hypergeometric
functions for arbitrary parameter values and their use.
\emph{Journal of Approximation Theory,} \textbf{218} (2017),
42--70.

\bibitem{KAR2} D.B. Karp, J.L. Lopez, An extension of the multiple
Erdelyi-Kober operator and representations of the generalized
hypergeometric functions. \emph{Fract. calcul. Appl. Anal.,}
\textbf{21} No 5 (2018), 1360--1376. DOI: 10.1515/fca-2018-0071.

\bibitem{KIL} A.A. Kilbas, H.M. Srivastava, J.J. Trujillo, \emph{Theory and
Applications of Fractional Differential Equations}. vol. 204 of
North-Holland Mathematics Studies, Elsevier Science B.V.,
Amsterdam (2006).

\bibitem{KIR} V. Kiryakova, \emph{Generalized Fractional
Calculus and Applications}. Pitman Research Notes in Mathematics
Vol. 301, Longman (1994).

\bibitem{KOC} A.N. Kochubei, General fractional calculus, evolution
equations and renewal processes. \emph{Integral Equations Operator Theory, }%
\textbf{71} (2011), 583-600.

\bibitem{KOL} V. Kolokoltsov, The probabilistic point of view on the
generalized fractional partial differential equations.
\emph{Fractional Calculus and Applied Analysis, }\textbf{22}, No
3, (2019), 543--600, DOI: 10.1515/fca-2019-0033.

\bibitem{KUM} A. Kumar, P. Vellaisamy, Inverse tempered stable
subordinators. \emph{Statistics and Probability Letters,
}\textbf{103}, (2015), 134-141.

\bibitem{MAI} F. Mainardi, Fractional relaxation-oscillation and fractional
diffusion-wave phenomena. \emph{Chaos, Solitons and Fractals,}
\textbf{7}, No 9 (1996), 1461-1477.

\bibitem{MAR} O.I. Marichev, \emph{Handbook of Integral Transforms of Higher
Transcendental Functions, Theory and Algorithmic Tables}. Ellis
Horwood, Chichester (1983).

\bibitem{MAT} A.M. Mathai, R.K. Saxena, H.J. Haubold, \emph{The H-Function:
Theory and Applications}. Springer, New York (2010).

\bibitem{MEE} M.M. Meerschaert, A. Sikorskii,\emph{\ Stochastic Models for
Fractional Calculus}. De Gruyter, Berlin/Boston (2012).

\bibitem{MIL} K.S. Miller, B. Ross, \emph{An Introduction to the Fractional
Calculus and Fractional Differential Equations. }Wiley, New York
(1993).

\bibitem{ORS} E. Orsingher, C. Ricciuti, B. Toaldo, Time-inhomogeneous jump
processes and variable order operators. \emph{Potential Analysis,
}\textbf{45}, No 3 (2016), 435-461.

\bibitem{PAR} R.B. Paris, \emph{Incomplete gamma function}. In: F.W. Olver,
D.M. Lozier, R. Boisvert, C.W. Clark, NIST Handbook of
Mathematical Functions, Cambridge University Press (2010).

\bibitem{PIP} V. Pipiras, M. Taqqu, \emph{Long-Range Dependence
and~Self-Similarity}. Cambridge University Press (2017).

\bibitem{SAM} G. Samorodnitsky, M. Taqqu, \emph{Stable Non-Gaussian Random
Processes: Stochastic Models with Infinite Variance. }Chapman and
all, New York (1994).

\bibitem{SCH} R.L. Schilling, R. Song, Z. Vondracek. \emph{Bernstein
Functions: Theory and Applications}, 37, De Gruyter Studies in
Mathematics Series, Berlin (2010).

\bibitem{SPR} M.D. Springer, W.E. Thompson, The distribution of products of
beta, gamma and Gaussian random variables. \emph{SIAM J. Appl.
Math}., \textbf{18}, No 4 (1970), 721-737.

\bibitem{STA} E.W. Stacy, A generalization of the gamma distribution. \emph{%
The Annals of Mathematical Statistics}, \textbf{33}, No 3, (1962),
1187-1192.

\bibitem{STAW} A. Stanislavsky, K. Weron, A. Weron, Diffusion and relaxation
controlled by tempered $\alpha $-stable processes. \emph{Physical
Review E}, 051106 (2008), 1-6.

\bibitem{TOA} B. Toaldo, Convolution-type derivatives, hitting-times of
subordinators and time-changed $C_{0}$-semigroups. \emph{Potential Analysis}%
, \textbf{42}, No 1, (2015), 115-140.

\bibitem{VEL} P. Vellaisamy, K.K. Kataria, The I-function distribution and
its extensions. \emph{Teor. Veroyatnost. i Primenen}., \textbf{63}
No 2, (2018) 284-305.

\bibitem{WYL} A. Wy\l omanska, Arithmetic Brownian motion subordinated by
tempered stable and inverse tempered stable processes.
\emph{Physica A}, \textbf{391} (2012) 5685-5696.

\bibitem{WYL2} A. Wy\l omanska, The tempered stable process with infinitely
divisible inverse subordinators. \emph{J. Stat. Mech. Theory
Exp}., (2013) 1-18.

\bibitem{WYL3} A. Wylomanska, A. Chechkin, J. Gajda, I.M. Sokolov,
Codifference as a~practical~tool to measure
interdependence,~\emph{Physica A: Statistical Mechanics and its
Applications}, \textbf{421}, No 1 (2015), 412-429.

\bibitem{WOL} S.J. Wolfe, On moments of probability distribution
functions.
In: \emph{Fractional Calculus and Its Applications}, B. Ross
(ed.), Lect. Notes in Math. 457, Springer, Berlin, (1975),
306-316.


\end{thebibliography}
\end{document}